\numberwithin{equation}{section}
\newcounter{dummy} \numberwithin{dummy}{section}
\newtheorem{proposition}[dummy]{Proposition}
\newtheorem{theorem}[dummy]{Theorem}
\newtheorem{corollary}[dummy]{Corollary}
\newtheorem{lemma}[dummy]{Lemma}
\theoremstyle{remark}
\newtheorem{example}[dummy]{Example}
\newtheorem{remark}[dummy]{Remark}
\newcommand{\calD}{\mathcal{D}}
\newcommand{\calL}{\mathcal{L}}
\newcommand{\calV}{\mathcal{V}}
\DeclareMathOperator{\spn}{span}
\DeclareMathOperator{\Id}{Id}
\DeclareMathOperator{\Ad}{Ad}
\DeclareMathOperator{\SO}{SO}
\DeclareMathOperator{\Ort}{O}
\DeclareMathOperator{\so}{\mathfrak{so}}
\DeclareMathOperator{\Ann}{Ann}
\DeclareMathOperator{\Hol}{Hol}
\DeclareMathOperator{\pr}{pr}
\DeclareMathOperator{\ptr}{/ \! /}
\DeclareMathOperator{\vl}{vl}
\newcommand{\bnabla}{\blacktriangledown}
\title[Submersions and curves of constant geodesic curvature]{Submersions and curves of constant geodesic curvature}
\author[Godoy Molina, Grong and Markina]{Mauricio Godoy Molina, Erlend Grong and Irina Markina}
\address{Departamento de Matem\'atica y Estad\'istica, Universidad de La Frontera, Chile}
\email{mauricio.godoy@ufrontera.cl}
\address{Universit\'e Paris Sud, Laboratoire des Signaux et Syst\`emes (L2S) Sup\'elec, CNRS, Universit\'e
Paris-Saclay, 3 rue Joliot-Curie, 91192 Gif-sur-Yvette, France and University
of Bergen, Department of Mathematics, P. O. Box 7803, 5020 Bergen, Norway.}
\email{erlend.grong@gmail.com}
\address{Department of Mathematics, University of Bergen, Norway.}
\email{irina.markina@uib.no}
\thanks{The first author is partially supported by the Grants Anillo ACT 1415 PIA CONICYT and DI17-0147 from Universidad de La Frontera. The first and the last authors were partially supported by EU FP7 IRSES program STREVCOMS, grant no. PIRSES-GA-2013-612669, as well as the second and the last authors were partially supported by ISP project 239033/F20 of Norwegian Research Council. The second author was  also partially supported by FRINATEK project 249980/F20 Norwegian Research Council.}
\subjclass[2010]{53C17, 53C20, 53C22}
\keywords{Sub-Riemannian geometry, geodesic curvature, submersion, connection, curvature, torsion}
\begin{document}

\maketitle

\begin{abstract}
Considering Riemannian submersions, we find necessary and sufficient conditions for when sub-Riemannian normal geodesics project to curves of constant first geodesic curvature or constant first and vanishing second geodesic curvatures. We describe a canonical extension of the sub-Riemannian metric and study geometric properties of the obtained Riemannian manifold. This work contains several examples illustrating the results.
\end{abstract}

\maketitle

\section{Introduction}

The study of curves in surfaces having constant geodesic curvature is an old problem in differential geometry, whose origin can be traced back to classic works by Bianchi and Darboux~\cite{B,D}. The analogous definition of geodesic curvatures for curves in a Riemannian manifold, and its relation to the generalized Frenet frame, has been known for many years, see~\cite{S} and Section~\ref{ssec:Frenet} of the present paper. The problem of determining which curves have constant geodesic curvature in the more general setting of manifolds of dimension three or higher is much more complicated, and to our knowledge there has been no comprehensive treatment of this problem. In the past few years, curves of constant geodesic curves have played an important role in interpolation in Riemannian manifolds, see e.g.~\cite{KMSB}.

In many examples in sub-Riemannian geometry, curves of constant geodesic curvature appear as images under submersions of normal sub-Riemannian geodesics. To name a few cases in which this situation occurs, the projections to the $xy$ plane of sub-Riemannian geodesics in the Heisenberg group are circles~\cite{M}, in $H$-type groups they are the circles in the horizontal layer~\cite{CChM}, and the Hopf fibration maps sub-Riemannian geodesics in the three-dimensional sphere $S^3$ to parallel circles in $S^2$, see~\cite{CMV}.

In the present paper, we give a characterization of the submersions from a sub-Riemannian manifold $M$ to a Riemannian manifold $N$ that map normal sub-Riemannian geodesics to curves with constant geodesic curvature. The main results are stated in Theorems~\ref{th:main1} and~\ref{th:main2} in Section~\ref{sec:maintheorems} in the terms defined on the underlying  Riemannian manifold $N$. To prove the result we use a special choice of the connection on the sub-Riemannian manifold $M$ and find a criterion for the main theorems in terms of this connection. In Section~\ref{ssec:spaces} we extend canonically the sub-Riemannian metric on $M$ to a Riemannian metric and study this extended Riemannian geometry. In Section~\ref{sec:example} give several examples reflecting this geometry. We are particularly focused on principal bundles and complete Riemannian submersions. Finally, we give a result for $H$-type manifolds in Section~\ref{sec:HType}.


\section{Submersions and sub-Riemannian geometry}\label{sec:proof}


In what follows, all manifolds under consideration are connected. Given a vector bundle $E\to M$ over a manifold $M$, we denote its sections by $\Gamma(E)$. All curves are defined on an interval $I =[0,\tau]$.


\subsection{Sub-Riemannian structures and geodesics}\label{ssec:prelim} 


A sub-Riemannian manifold is a pair $(M,g^*)$, where $g^*$ is a bilinear positive semidefinite tensor on the cotangent bundle $T^*M$ that degenerates along a subbundle. The latter requirement implies that the image of the map
$$
\sharp^{g}: T^*M \to T^{**}M\cong TM, \qquad\lambda\mapsto g^*(\lambda,\cdot),
$$
is a subbundle $\calD$ of $TM$. We will call $g^*$ the sub-Riemannian cometric and $\calD$ the horizontal bundle. The cometric induces a positive definite tensor $g$ on the subbundle ${\mathcal D}$ by the relation
\begin{equation}\label{eq:metric}
g(\sharp^{g}\lambda_1,\sharp^{g}\lambda_2):=g^*(\lambda_1,\lambda_2), \quad\lambda_1,\lambda_2\in T_x^*M,\quad x\in M.
\end{equation}
Conversely, given a pair $({\mathcal D},g)$, where $\mathcal D$ is a subbundle of the tangent bundle $TM$ and $g$ is a metric on ${\mathcal D}$, the relation~\eqref{eq:metric} defines a cometric $g^*$ degenerating along the subbundle ${\rm Ann}({\mathcal D})$ of covectors vanishing on~${\mathcal D}$. Hence a sub-Riemannian manifold can be equivalently defined as a triple $(M,{\mathcal D},g)$ consisting of a subbundle ${\mathcal D}$ of $TM$ endowed with a metric $g$ defined on ${\mathcal D}$, see~\cite{M}. In what follows, we write $g(v,w) = \langle v,w \rangle_g$ and $|v|_g = \langle v,v \rangle_g^{1/2}$ for any $v,w\in \calD_x$. We use similar notation related to $g^*$, though we note that for a fixed $x \in M$, $| \, \cdot \, |_{g^*}$ is only a semi-norm on $T_x^*M$. The upper-case letters $V,W$ are used to denote sections of vector bundles and lower-case letters $v,w$ denote vectors. 

We assume that the subbundle $\calD$ is bracket generating, i.e. the sections of $\calD$ and their iterated brackets span $TM$ at each point $x\in M$. In this case the Carnot-Carath\'eodory distance function on $M$ is defined by taking the infimum over the length of curves tangent to $\calD$ and connecting given points. A curve is called a sub-Riemannian geodesic if any sufficiently short segment of the curve realizes this infimum. The sub-Riemannian geodesics are divided into two groups: normal and abnormal. An abnormal sub-Riemannian geodesic is the shortest curve in the class of abnormal curves whose properties are related only to the nature of the subbundle $\mathcal D$, see for instance~\cite{Monti, M}. They will not be considered in the present work. The normal sub-Riemannian geodesics are the projections on $M$ of solutions to the Hamiltonian system generated by the sub-Riemannian Hamiltonian function $H(\lambda)=\frac{1}{2}|\lambda|_{g^*}^2$. They can be described by the following statement.

\begin{lemma}\cite[Proposition 2.1]{GG} \label{lemma:GG}
Let $\nabla$ be any affine connection on $M$ such that $\nabla g^*=0$. A curve $\gamma\colon I\to M$ is a normal sub-Riemannian geodesic if and only if there exists a covector field $\lambda(t)$ along $\gamma(t)$ which satisfies $\sharp^g\lambda(t)=\dot\gamma(t)$ and
\begin{equation}\label{eq:srgeodeqT}
\nabla_{\dot\gamma}\lambda(t)= -\lambda(t)T\big(\dot\gamma(t), \, \cdot \, \big),
\end{equation}
where $T(V,W)=\nabla_VW-\nabla_WV-[V,W]$ is the torsion of the connection $\nabla$.
\end{lemma}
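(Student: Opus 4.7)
The plan is to derive the stated equation directly from the Hamiltonian description of normal sub-Riemannian geodesics. By definition, $\gamma$ is a normal geodesic precisely when it is the base projection of an integral curve $\lambda(t)\in T^*M$ of the Hamiltonian vector field associated to $H(\lambda)=\frac{1}{2}|\lambda|_{g^*}^2$. In local coordinates $(x^i,p_i)$ on $T^*M$, one has $H=\frac{1}{2}g^{ij}(x)p_ip_j$, and Hamilton's equations read
\begin{equation*}
\dot\gamma^k=g^{kj}p_j,\qquad \dot p_k=-\tfrac{1}{2}(\partial_k g^{ij})p_ip_j.
\end{equation*}
The first equation is already $\sharp^g\lambda=\dot\gamma$, so only the second must be recast in terms of the given connection $\nabla$.

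The key computation uses $\nabla g^*=0$. Writing the Christoffel symbols of $\nabla$ as $\Gamma^i_{jk}$, this assumption becomes $\partial_k g^{ij}=-\Gamma^i_{kl}g^{lj}-\Gamma^j_{kl}g^{il}$. Substituting into the second Hamilton equation and exploiting the symmetry of $p_ip_j$ together with $g^{lj}p_j=\dot\gamma^l$ collapses it to $\dot p_k=\Gamma^i_{kl}\dot\gamma^l p_i$. Comparing with the induced connection on $T^*M$, for which $(\nabla_{\dot\gamma}\lambda)_k=\dot p_k-\dot\gamma^l\Gamma^j_{lk}p_j$, one obtains
\begin{equation*}
(\nabla_{\dot\gamma}\lambda)_k=(\Gamma^i_{kl}-\Gamma^i_{lk})\dot\gamma^l p_i=-T^i_{lk}\dot\gamma^l p_i=-\bigl(\lambda\,T(\dot\gamma,\,\cdot\,)\bigr)_k,
\end{equation*}
which is the desired identity. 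The converse is established by reading the same chain in reverse: a covector field $\lambda$ with $\sharp^g\lambda=\dot\gamma$ that satisfies the torsion equation reproduces Hamilton's system, and uniqueness of solutions of first-order ODEs with given initial data finishes the argument.

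The main obstacle is the careful sign bookkeeping, in particular distinguishing the action of $\nabla$ on vector fields from the induced action on covector fields along a curve. Conceptually, the torsion term appears precisely because the metric connection is not required to be torsion-free: the antisymmetric part of $\Gamma^i_{jk}$ is the piece that fails to be controlled by $\nabla g^*=0$, and it survives as the correction $-\lambda T(\dot\gamma,\,\cdot\,)$. A sanity check is that if $\nabla$ is the Levi-Civita connection of a Riemannian extension of $g^*$, the torsion vanishes and the equation reduces to the parallel transport condition $\nabla_{\dot\gamma}\lambda=0$ characterizing classical geodesics, and the independence of the result from the particular choice of metric connection follows from the fact that both sides of (2.3) transform identically under a change of connection preserving $g^*$.
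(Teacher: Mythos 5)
Your proof is correct. Note that the paper itself does not prove this lemma at all: it is imported verbatim from \cite[Proposition 2.1]{GG}, so there is no internal proof to compare against; the cited source argues invariantly with the Hamiltonian vector field and the splitting induced by a compatible connection, whereas you give a self-contained local-coordinate verification. Your computation checks out: $\nabla g^*=0$ gives $\partial_k g^{ij}=-\Gamma^i_{kl}g^{lj}-\Gamma^j_{kl}g^{il}$, symmetrization against $p_ip_j$ turns Hamilton's equation $\dot p_k=-\tfrac12(\partial_kg^{ij})p_ip_j$ into $\dot p_k=\Gamma^i_{kl}\dot\gamma^lp_i$, and subtracting the covector covariant derivative leaves exactly the antisymmetric part $-T^i_{lk}\dot\gamma^lp_i$, which is $-\lambda T(\dot\gamma,\cdot)$; since the paper defines normal geodesics precisely as projections of integral curves of $\vec H$, the converse is immediate because a pair $(\gamma,\lambda)$ with $\sharp^g\lambda=\dot\gamma$ and the torsion equation satisfies Hamilton's system outright --- the appeal to uniqueness of ODE solutions is superfluous. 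Two minor remarks: your closing claim that the equation is independent of the choice of compatible connection is true (compatibility of both connections forces the difference tensor $A$ to satisfy $\lambda(A(\cdot,\sharp^g\lambda))=0$), but it is not needed since your argument works for an arbitrary compatible $\nabla$; and the sanity check about the Levi-Civita connection only makes sense when $g^*$ is nondegenerate, since for a genuinely sub-Riemannian $g^*$ the Levi-Civita connection of a Riemannian extension does not in general satisfy $\nabla g^*=0$. Neither point affects the validity of the proof.
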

A connection satisfying $\nabla g^* = 0$ will be called \emph{compatible} with the sub-Riemannian structure. We call the covector field $\lambda(t)$ along the geodesic described above \emph{a sub-Riemannian extremal}. The curve $\lambda(t) = e^{t\vec{H}}(\lambda(0))$ is an integral curve of the sub-Riemannian Hamiltonian vector field $\vec{H}$ associated to the sub-Riemannian Hamiltonian function $H$. When proving statements about all normal geodesics, it is usually sufficient to consider the case $t = 0$ from the property $\lambda(s+ t) = e^{t\vec{H}}(\lambda(s))$. This fact will be used in the proof of the main results.


\subsection{Submersions, Ehresmann connections and curvature}\label{sec:Submersion}


Let $\pi: M \to N$ be a surjective submersion of manifolds, meaning that $d\pi: TM \to TN$ is a surjective map as well. If we let $m$ and $n$ denote the dimension of $M$ and $N$, then the subbundle $\calV = \ker d\pi \subseteq TM$ is called \emph{the vertical bundle} of the submersion $\pi$ and it has rank $m-n$. Since each value $y \in N$ is a regular value of $\pi$, the fiber $M_y = \pi^{-1}(y)$ is an embedded submanifold of~$M$, which is obviously tangent to $\calV|{M_y}$.

\emph{An Ehresmann connection $\calD$ on $\pi$} is a choice of a subbundle of $TM$ satisfying $TM = \calD \oplus \calV$. Since $d\pi$ restricted to $\calD_x$ gives a bijection to $T_{\pi(x)}N$ for every $x \in N$, we have its inverse map $h_x: T_{\pi(x)}N \to \mathcal{D}_x$, which is called the \emph{horizontal lift}. The map $h_x$ allows us to define horizontal lifts $hX$ of vector fields $X$ on $N$ by $hX(x) = h_x X(\pi(x))$. If we fix a point $x_ 0$ in the fiber $M_{y_0}$, then for any absolutely continuous curve $\eta\colon I \to N$ with $\eta(0)=y_0$ there is a unique curve $\gamma(t)$ almost everywhere tangent to $\mathcal{D}$ and satisfying $\pi(\gamma(t)) = \eta(t)$, $\gamma(0) = x_0$. The curve $\gamma$ is given by the solution to the differential equation
$$\dot \gamma(t) = h_{\gamma(t)} \dot \eta(t), \qquad \gamma(0) = x_0.$$

Let $\pr_\calD$ and $\pr_\calV$ be the projections to $\calD$ and $\calV$, respectively, associated to the decomposition $TM = \calD \oplus \calV$. \emph{The curvature $R^\calD$} of $\calD$ is a vector-valued two-form given by
$$R^{\calD}(V,W) = \pr_\calV [\pr_\calD V, \pr_\calD W], \qquad V,W \in \Gamma(TM).$$
Notice that $R^\calD = 0$ if and only if $\calD$ is an integrable subbundle. For vectors $v,w \in T_y N$, define $R^\calD(v,w)$ as the vector field on $M_y$ by $x \mapsto R^\calD(h_x v, h_x w)$.

We will need the following notion developed in~\cite{Gro16}. Let $\Ann(\calD)$ denote the subbundle of $T^*M$ of covectors vanishing on $\calD$ and let $\pi_{T^*M}\colon T^*M\to M$ be the canonical projection. Let $\Pi$ be the projection 
$$\Pi\colon \Ann(\calD) \to M \to N, \quad\text{that is}\quad \Pi=\pi_{T^*M}\circ\pi.$$
Even though $\Pi$ is not a vector bundle, we can define an analogue of parallel transport of elements along a curve $\eta\colon I \to N$. Define
$$\bnabla_X \beta = \pr_\calV^* \calL_{hX} \beta, \qquad X \in \Gamma(TN),\ \  \beta \in \Gamma(\Ann(\calD)).$$
Here $\pr_\calV^*$ is the pullback of the vertical projection, i.e., $(\pr_\calV^* \alpha)(v) = \alpha(\pr_\calV v)$.
Note that the equalities $\bnabla_{fX} \beta = f \bnabla_X \beta$ and $\bnabla_X \tilde f \beta =  (hX\tilde f) \beta + \tilde f \bnabla_X \beta$ are valid for any $f \in C^\infty(N)$ and $\tilde f \in C^\infty(M)$. Furthermore, if $\gamma\colon I \to M$ is the horizontal lift of a curve $\eta$ and $\beta(t)$ is a section of $\Ann(\calD)$ along $\gamma(t)$, then $\bnabla_{\dot \eta} \beta$ is well defined. Observe that if $\beta_0 \in \Ann(\calD)_{y_0} = \Pi^{-1}(y_0)$, then there is a unique $\beta(t)$ solving the equation
$$\bnabla_{\dot \eta} \beta(t) = 0, \qquad \beta(0) = \beta_0.$$


\subsection{Frenet frame and geodesic curvatures}\label{ssec:Frenet}


We recall the notion of geodesic curvatures for a curve $\eta$ in an $n$-dimensional Riemannian manifold $N$. For more details, see \cite[Appendix~B]{S}. Let $g_N(\cdot\,,\cdot)=\langle\cdot\,,\cdot\rangle_{g_N}$ be a Riemannian metric and $\nabla^N$ the Levi-Civita connection on~$N$. We assume that the curve $\eta\colon I\to N$ is parameterized by arc length and define a unit vector field by $e_1(t) = \dot \eta(t)$. We call it the first Frenet vector field. Next, we define the first geodesic curvature by
 \begin{equation}\label{eq:kappa1CL}
\kappa_1(t)=|\nabla^N_{\dot\eta}\dot\eta(t)|_{g_N}.
\end{equation}
The curve $\eta(t)$ is a geodesic if and only if $\kappa_1(t)$ vanishes identically. If $\kappa_1(t) \neq 0$, we define the second Frenet vector field $e_2$ as the unit vector field determined by relation
\begin{equation} \label{eq:Kappa2}
\nabla_{\dot \eta}^N \dot \eta(t) = \nabla_{\dot \eta}^N e_1(t) = \kappa_2(t) e_2(t).\end{equation}
Since $\langle e_1(t),\nabla^N_{\dot\eta} e_1(t)\rangle_{g_N}=0$, we have that $e_2$ is orthogonal to $e_1$. Continuing inductively, assuming that $e_1(t), \dots, e_j(t)$ and $\kappa_1(t), \dots, \kappa_{j-1}(t)$ have been defined, we can define $j$-th geodesic curvature by
$$\kappa_j(t) = |\nabla^N_{\dot \eta} e_j(t) + \kappa_{j+1}(t) e_{j+1}|_{g_N},$$
and if $\kappa_j(t)$ is the non-vanishing, then we define $(j+1)$-st Frenet vector field by relation
\begin{equation} \label{eq:KappaJ}
\nabla_{\dot \eta} e_j(t) = - \kappa_{j-1}(t) e_{j-1}(t) + \kappa_j(t) e_{j+1}(t).\end{equation}
We call $e_1(t), \dots, e_n(t)$ the Frenet frame along $\eta$, provided it exists.
If all geodesic curvatures $\kappa_1(t)$, $\dots$, $\kappa_{n-1}(t)$ are well-defined, they uniquely determine the curve $\eta$ up to initial conditions $\eta(0)$ and $\dot \eta(0)$. The same is true if $\kappa_1(t), \dots, \kappa_{j-1}(t)$ is well defined and $\kappa_j$ vanish identically. In this case, we define all higher geodesic curvatures to be vanishing.

We will focus on curves with constant first geodesic curvatures or with constant first geodesic curvature and vanishing second geodesic curvatures. When $N$ is the Euclidean space, the latter curves are circle arcs.


\subsection{Projections of geodesics}\label{sec:maintheorems}


Let $\pi\colon M \to N$ be a submersion into a Riemannian manifold $(N, g_N)$. Consider a sub-Riemannian manifold $(M, \calD, g)$, where the sub-Riemannian metric $g$ is the pullback of the metric $g_N$ to $\calD$. We use the parallel transport defined by $\bnabla$ to state a criterion for a curve in $M$ to be a normal sub-Riemannian geodesic in terms of its projection under $\pi$. For any element $\beta \in \Ann(\calD)$ with $\Pi(\beta) = y$, we define a linear map $J_{\beta}\colon T_y N \to T_y N$ by
$$\langle J_{\beta} v, w \rangle_{g_N} = \beta R^{\calD}(v,w).$$  
Note that $J_{\beta}^* = - J_{\beta}$ by the skew symmetry of the curvature $R^{\calD}$.
\begin{lemma}\cite[Corollary~2.3]{Gro16}\label{lem:Gro16}
A curve $\gamma: I \to M$ is a sub-Riemannian normal geodesic if and only if it is the horizontal lift of a curve $\eta: I \to N$ satisfying
\begin{equation}\label{eq:nablaeta}
\nabla_{\dot \eta}^N \dot\eta = J_{\beta(t)} \dot \eta, \qquad \bnabla_{\dot \eta} \beta(t) = 0,
\end{equation}
for some section $\beta(t)$ of $\Ann(\calD)$ along $\eta(t)$.
\end{lemma}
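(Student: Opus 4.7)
The plan is to apply Lemma~\ref{lemma:GG} with a compatible connection on $M$ adapted to the submersion $\pi$, and then track how the sub-Riemannian extremal $\lambda(t)$ decomposes under the splitting $T^*M \cong \pi^*T^*N \oplus \Ann(\calD)$ induced by $TM = \calD \oplus \calV$. Since $\dot\gamma(t) = \sharp^g\lambda(t) \in \calD$, any normal geodesic is automatically horizontal, so $\gamma$ is the horizontal lift of $\eta = \pi\circ\gamma$. Writing $\lambda(t) = \pi^*\alpha(t) + \beta(t)$ with $\alpha(t) \in T^*_{\eta(t)}N$ and $\beta(t) \in \Ann(\calD)_{\gamma(t)}$, the condition $\sharp^g\lambda = h\dot\eta$ forces $\alpha(t) = \flat^{g_N}\dot\eta(t)$, while $\beta$ is free.

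Next, I choose a compatible connection $\nabla$ on $M$ with the properties (a) $\nabla_{hX}hY = h\nabla^N_XY$ for vector fields on $N$, (b) $\nabla$ preserves the decomposition $\calD\oplus\calV$, and (c) $\nabla_Z hX = 0$ whenever $Z$ is vertical. A direct check shows that this $\nabla$ indeed satisfies $\nabla g^*=0$, and a short computation gives the torsion identity $T(hX,hY) = -R^\calD(X,Y)$, using $[hX,hY] = h[X,Y] + R^\calD(X,Y)$ together with torsion-freeness of $\nabla^N$.

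Feeding this into the transport equation $\nabla_{\dot\gamma}\lambda = -\lambda\, T(\dot\gamma,\cdot)$, I pair separately against horizontal and vertical test vectors. For a horizontal $hW$, the left side simplifies by (a) to $(\nabla^N_{\dot\eta}\alpha)(W) = \langle\nabla^N_{\dot\eta}\dot\eta, W\rangle_{g_N}$, while the right side becomes $\beta(R^\calD(\dot\eta,W)) = \langle J_\beta\dot\eta, W\rangle_{g_N}$ by the definition of $J_\beta$; arbitrariness of $W$ yields the first equation in~\eqref{eq:nablaeta}. For a vertical $Z$, the $\pi^*\alpha$-contribution drops out because $d\pi Z = 0$, and the remaining terms reassemble, via the defining formula $\bnabla_X\beta = \pr_\calV^*\calL_{hX}\beta$ together with (c), into $(\bnabla_{\dot\eta}\beta)(Z) = 0$, producing the second equation. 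The converse runs the same calculation in reverse: given $\eta$ satisfying~\eqref{eq:nablaeta}, put $\alpha = \flat^{g_N}\dot\eta$, $\lambda = \pi^*\alpha + \beta$, and horizontally lift $\eta$ to $\gamma$; the identities above show that $\lambda(t)$ solves the extremal equation of Lemma~\ref{lemma:GG}.

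The main obstacle is arranging the connection in the second step: property (c) is precisely what makes the vertical component of the transport equation collapse onto $\bnabla_{\dot\eta}\beta = 0$ without residual torsion terms, while (a)--(b) are needed to produce the curvature $R^\calD$ cleanly in the horizontal component. Once such a connection is in place, the rest is an algebraic separation of the Hamiltonian geodesic equation into its horizontal and vertical pieces.
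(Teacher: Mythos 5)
Your proposal is correct, but note that the paper does not actually prove this statement: it is imported verbatim from \cite[Corollary~2.3]{Gro16}, so there is no internal proof to compare against. Your argument is a legitimate self-contained derivation, and it is in fact exactly the route suggested by the machinery the paper sets up immediately afterwards: the connection you postulate in your second step is (a member of) the class defined by conditions (i)--(iii), your torsion identity $T(hX,hY)=-R^{\calD}(hX,hY)$ is item \eqref{item:e} of Proposition~\ref{prop:nabla}, the statement that $\nabla_{\dot\gamma}\beta=0$ is the same as $\bnabla_{\dot\eta}\beta=0$ is item \eqref{item:c}, and your horizontal pairing is the pushforward by $d\pi$ of the rewritten geodesic equation \eqref{item:g}, using $\sharp^g\big(\beta R(\dot\gamma,\cdot)\big)=h J_\beta\dot\eta$. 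Two small points to make explicit so the argument is airtight: first, your conditions (a) and (c) only pin down $\nabla$ on $\calD\cong\pi^*TN$, so you should say that on $\calV$ you take any completion preserving the splitting (your condition (b)); this is what guarantees both that $\pi^*\alpha(\nabla_{\dot\gamma}W)=0$ for vertical $W$ and that the dual connection preserves $\Ann(\calD)\oplus\Ann(\calV)$, which is where $\nabla g^*=0$ really comes from (the unspecified $\nabla_{hX}W$ term then cancels against the torsion in the vertical pairing, as you observe). Second, in the backward direction one should note that verifying \eqref{eq:srgeodeqT} on horizontal and on vertical test vectors separately suffices because $TM=\calD\oplus\calV$, and that $\sharp^g\lambda=h\dot\eta=\dot\gamma$ holds by construction since $\sharp^g$ kills $\Ann(\calD)$ and $g$ is the pullback of $g_N$. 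With those remarks added, your proof stands on its own and is consistent with (indeed reconstructs, inside this paper's framework) the cited result.
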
 

We now state the main results of the present paper. We ask the reader to pay special attention to the fact that the below statements both involve sub-Riemannian geodesics in $M$ and Riemannian geodesics in $N$. 
\begin{theorem} \label{th:main1}
The following statements are equivalent.
\begin{enumerate}[\rm (I')]
\item \label{item:Iprime} The projection of any normal sub-Riemannian geodesic $\gamma$ to $N$ is a curve with constant first geodesic curvature.
\item \label{item:IIprime} For any Riemannian geodesic $\eta$ in $N$ and any covector field $\beta\in\Gamma\big(\Ann(\calD)\big)$ along $\eta$ satisfying $\bnabla_{\dot \eta} \beta = 0$, the vector field $J_{\beta(t)} \dot \eta$ has constant length.
\end{enumerate}
\end{theorem}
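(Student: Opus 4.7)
The plan is to reformulate both (I') and (II') as the same infinitesimal condition at $t=0$ on arbitrary initial data $(v_0,\beta_0) \in T_{y_0}N \times \Ann(\calD)_{x_0}$ with $\pi(x_0)=y_0$, and then to observe that the apparent discrepancy between the two settings vanishes thanks to the skew-symmetry of $J_\beta$.

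First, since the pullback of $g_N$ under $d\pi\colon \calD\to TN$ equals $g$, the speed $|\dot\eta|_{g_N}$ is constant along both sub-Riemannian normal geodesic projections and along Riemannian geodesics. After arc-length reparametrization, Lemma~\ref{lem:Gro16} shows that the first geodesic curvature of such an $\eta$ is (up to the nonzero constant factor $|\dot\eta|_{g_N}^{-2}$) equal to $|J_{\beta(t)}\dot\eta(t)|_{g_N}$. Consequently both (I') and (II') assert that $|J_{\beta(t)}\dot\eta(t)|_{g_N}^{2}$ is constant along the relevant curve. Using the flow structure of the sub-Riemannian Hamiltonian system on $T^*M$ in case (I') and of the combined Riemannian geodesic flow on $TN$ with $\bnabla$-parallel transport on $\Ann(\calD)$ in case (II'), both conditions reduce to showing that
\[
\left.\frac{d}{dt}\right|_{t=0}|J_{\beta(t)}\dot\eta(t)|_{g_N}^{2} = 0
\]
for every initial data $(v_0,\beta_0)$. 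Every such pair arises as an initial condition in both settings: given $(v_0,\beta_0)$, let $\mu_0\in\Ann(\calV)_{x_0}$ satisfy $\sharp^{g}\mu_0 = h_{x_0}v_0$; then $\lambda_0 = \mu_0+\beta_0$ gives the sub-Riemannian extremal, whereas for (II') one takes the Riemannian geodesic from $y_0$ with velocity $v_0$ together with the $\bnabla$-parallel transport of $\beta_0$.

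Now fix $(v_0,\beta_0)$ and compute the derivative. Introduce a $\nabla^N$-parallel orthonormal frame $e_1(t),\dots,e_n(t)$ along $\eta$ and a $\bnabla$-parallel frame $f_1(t),\dots,f_{m-n}(t)$ of $\Ann(\calD)$ along the horizontal lift $\gamma(t)$. Writing $\dot\eta=\sum_i a^i e_i$ and $\beta=\sum_j b^j f_j$, one finds $\dot b^j(0)=0$ in both cases, while $\dot a^i(0) = \langle J_{\beta_0}v_0, e_i(0)\rangle_{g_N}$ in case (I') (by Lemma~\ref{lem:Gro16}) and $\dot a^i(0)=0$ in case (II'). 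Expanding $V(t) := J_{\beta(t)}\dot\eta(t) = \sum_{i,j} a^i b^j\, J_{f_j(t)}e_i(t)$ and computing $\nabla^N_{\dot\eta}V|_{t=0}$, the terms involving $\nabla^N_{\dot\eta}(J_{f_j(t)}e_i(t))|_{t=0}$ depend only on the shared initial points $(x_0,y_0)$, velocities $(v_0, h_{x_0}v_0)$, and frames $(e_i(0),f_j(0))$ through the parallel-transport ODEs, and therefore agree in both settings. The only surviving difference is
\[
\nabla^N_{\dot\eta}V\big|_{0,(\mathrm{I}')} - \nabla^N_{\dot\eta}V\big|_{0,(\mathrm{II}')} = \sum_{i,j}\langle J_{\beta_0}v_0,e_i(0)\rangle_{g_N}\,\beta_0^{j}\,J_{f_j(0)}e_i(0) = J_{\beta_0}^{2} v_0,
\]
so pairing with $V(0) = J_{\beta_0}v_0$ yields the scalar discrepancy $2\langle J_{\beta_0}^{2} v_0, J_{\beta_0}v_0\rangle_{g_N} = 2\langle J_{\beta_0}u,u\rangle_{g_N}$ with $u=J_{\beta_0}v_0$, which vanishes since $J_{\beta_0}^{*} = -J_{\beta_0}$. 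Thus the pointwise conditions at $t=0$ for (I') and (II') coincide, and the equivalence follows.

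The main obstacle will be justifying rigorously that the terms $\nabla^N_{\dot\eta}(J_{f_j(t)}e_i(t))|_{t=0}$ match across the two settings. This amounts to regarding $J$ as a bundle-level operation over $\Ann(\calD)$ and confirming that its time derivative at $t=0$ factors only through the common first-order data, not through higher-order information about $\eta$ or $\gamma$—an elementary but somewhat finicky bookkeeping exercise that can be carried out, for instance, in adapted local coordinates.
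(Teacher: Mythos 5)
Your proposal is correct and follows essentially the same route as the paper: both reduce (I$'$) and (II$'$) to the vanishing at $t=0$ of $\partial_t|J_{\beta(t)}\dot\eta(t)|^2_{g_N}$ over all initial data $(v_0,\beta_0)$, and both dispose of the only discrepancy between the two evolutions --- the term produced by $\nabla^N_{\dot\eta}\dot\eta=J_{\beta}\dot\eta$ in the sub-Riemannian case --- via skew-symmetry, your identity $\langle J_{\beta_0}^2v_0,J_{\beta_0}v_0\rangle_{g_N}=0$ being exactly the paper's computation~\eqref{eq:rgrg}. The ``finicky bookkeeping'' you defer is precisely what the paper's compatible connection $\nabla$ handles in one line: writing $J_{\beta(t)}\dot\eta(t)=\sharp^g\beta(t)R(\dot\gamma(t),\cdot)$ and applying the product rule gives the common term $\beta(\nabla_{\dot\gamma}R)(\dot\gamma,\cdot)$ plus the discrepancy term, so the matching of your frame terms $\nabla^N_{\dot\eta}(J_{f_j}e_i)|_{t=0}$ across the two settings is immediate rather than a coordinate exercise.
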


\begin{theorem} \label{th:main2}
The following statements are equivalent.
\begin{enumerate}[\rm (I)]
\item \label{item:I} The projection of any normal sub-Riemannian geodesic $\gamma$ to $N$ is a curve with constant first geodesic curvature and vanishing second geodesic curvature.
\item \label{item:II}For any Riemannian geodesic $\eta$ in $N$ and any covector field $\beta\in\Gamma\big(\Ann(\calD)\big)$ along $\eta$ satisfying $\bnabla_{\dot \eta} \beta = 0$, the vector field $J_{\beta(t)} \dot \eta$ is parallel. Furthermore, for any $\alpha \in \Ann(\calD)_x$ and $v \in T_y N$ satisfying $|v|_{g_N} =1$, we have
\begin{equation} \label{eq:J2} J_{\alpha}^2 v = - |J_{\alpha} v|^2 v.
\end{equation}
\end{enumerate}
\end{theorem}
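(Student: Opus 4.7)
My plan is to reduce~(I) to a single identity at $t=0$, carry out a direct comparison with the Riemannian geodesic carrying a $\bnabla$-parallel covector, and then separate the two clauses of~(II) by a scaling argument in~$\alpha_0$.

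First, by the Frenet equations of Section~\ref{ssec:Frenet}, a unit-speed curve $\eta$ in $N$ has constant first geodesic curvature and vanishing second geodesic curvature if and only if $\nabla^N_{\dot\eta}(\nabla^N_{\dot\eta}\dot\eta) = -|\nabla^N_{\dot\eta}\dot\eta|_{g_N}^2\,\dot\eta$. Substituting $\nabla^N_{\dot\eta}\dot\eta = J_{\beta(t)}\dot\eta$ from Lemma~\ref{lem:Gro16}, condition~(I) is equivalent to
$$\nabla^N_{\dot\eta}(J_\beta\dot\eta) = -|J_\beta\dot\eta|_{g_N}^2\,\dot\eta$$
along every sub-Riemannian geodesic projection $(\eta,\beta)$. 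Using the flow-invariance of the Hamiltonian system noted after Lemma~\ref{lemma:GG}, it suffices to verify this identity at $t=0$ for an arbitrary triple $(x_0,v_0,\alpha_0)$ with $|v_0|_{g_N}=1$, $v_0\in T_{\pi(x_0)}N$ and $\alpha_0\in\Ann(\calD)_{x_0}$.

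The technical core is a comparison identity. For such initial data, let $\eta$ be the sub-Riemannian geodesic projection with associated covector $\beta$ satisfying $\beta(0)=\alpha_0$, and let $\tilde\eta$ be the Riemannian geodesic from $\pi(x_0)$ with $\dot{\tilde\eta}(0)=v_0$ carrying a $\bnabla$-parallel covector $\tilde\beta$ with $\tilde\beta(0)=\alpha_0$. I would prove
$$\nabla^N_{\dot\eta}(J_\beta\dot\eta)\big|_{t=0} - \nabla^N_{\dot{\tilde\eta}}(J_{\tilde\beta}\dot{\tilde\eta})\big|_{t=0} = J_{\alpha_0}^2 v_0$$
by working in a parallel orthonormal frame $\{E_i(t)\}$ along $\eta$, writing $\dot\eta(t)=v^j(t)E_j(t)$, and expanding $\langle J_{\beta(t)}\dot\eta(t), E_i(t)\rangle_{g_N} = v^j(t)\,\beta(t)\bigl(R^\calD(hE_j,hE_i)(\gamma(t))\bigr)$. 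Differentiating at $t=0$, the only curve-dependent contribution comes from $\dot v^j(0)$ and assembles into $J_{\alpha_0}(\nabla^N_{\dot\eta}\dot\eta|_{t=0})$, equal to $J_{\alpha_0}^2 v_0$ in the sub-Riemannian case and to $0$ in the Riemannian case. The remaining terms, involving derivatives of $\beta$ and of $R^\calD$ along the horizontal lift, are shown to depend only on~$(x_0,v_0,\alpha_0)$ once $\bnabla_{\dot\eta}\beta=0$ is imposed: this relies on the fact that, for any torsion-free connection $\nabla^M$ on~$TM$, vertical $U$, and $\beta\in\Ann(\calD)$, the expansion $\bnabla_X\beta(U) = (\nabla^M_{hX}\beta)(U)+\beta(\nabla^M_U\,hX)$ is tensorial in~$X$. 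I expect this comparison to be the main obstacle, since care is needed to ensure the intrinsic character of each term.

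Combining the reformulation of~(I) with the comparison identity yields, at $t=0$,
$$\nabla^N_{\dot{\tilde\eta}}(J_{\tilde\beta}\dot{\tilde\eta})\big|_{t=0} = -|J_{\alpha_0}v_0|_{g_N}^2\,v_0 - J_{\alpha_0}^2 v_0. \qquad (\star)$$
For (II)$\Rightarrow$(I), condition~(II) makes both sides of~$(\star)$ vanish (the right-hand side by~\eqref{eq:J2}, the left-hand side by the parallelism requirement), and the comparison then delivers $\nabla^N_{\dot\eta}(J_\beta\dot\eta)|_{t=0} = J_{\alpha_0}^2 v_0 = -|J_{\alpha_0}v_0|_{g_N}^2\,v_0$, which is the reformulation of~(I) at $t=0$. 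For (I)$\Rightarrow$(II), condition~(I) gives $(\star)$ for every choice of initial data; replacing $\alpha_0$ by $c\alpha_0$ with $c\in\mathbb R$, the left-hand side of~$(\star)$ scales linearly in~$c$ (by linearity of $\bnabla$-parallel transport and of $\alpha\mapsto J_\alpha$) whereas the right-hand side scales quadratically. Since $(\star)$ must hold for every $c$ and every choice of initial data, both sides vanish, yielding simultaneously the parallelism statement of~(II) at $t=0$ (extended to all $t$ by repeating the argument at subsequent points along~$\tilde\eta$) and the algebraic identity~\eqref{eq:J2}.
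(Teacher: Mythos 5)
Your proposal is correct, and it is organized differently from the paper's proof. The paper first expresses $\kappa_1$ and $\kappa_2$ of the projection through a compatible connection $\nabla$ on $M$ (Lemma~\ref{lemma:Computation}), then in Lemma~\ref{lemma:Geodesic_b} converts statement (I) into the two pointwise criteria \eqref{eq:vRv}--\eqref{eq:RvRw} by evaluating at $(\alpha,v)$ and $(-\alpha,v)$ and separating even and odd parts in $\alpha$, and finally translates those criteria into (II). You instead bypass the intermediate criteria: you recast (I) as the single identity $\nabla^N_{\dot\eta}\nabla^N_{\dot\eta}\dot\eta=-|\nabla^N_{\dot\eta}\dot\eta|_{g_N}^2\dot\eta$, prove a comparison identity between the sub-Riemannian projection and the Riemannian geodesic with the same data $(x_0,v_0,\alpha_0)$, and then split the two clauses of (II) by scaling $\alpha_0\mapsto c\alpha_0$ (the left side of your $(\star)$ is linear in $c$, the right side quadratic), which generalizes the paper's $\pm\alpha$ trick. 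Your comparison identity is true: with the paper's connection, evaluating \eqref{eq:ggg} at $t=0$ gives $\nabla^N_{\dot\eta}(J_\beta\dot\eta)|_{t=0}=d\pi\,\sharp^g\alpha_0(\nabla_{h v_0}R)(h v_0,\cdot)+J_{\alpha_0}^2v_0$ along the sub-Riemannian projection, while along the Riemannian geodesic the same formula holds without the $J_{\alpha_0}^2v_0$ term, since both horizontal lifts share the velocity $h_{x_0}v_0$ and the $\bnabla$-parallel covectors share the same $1$-jet at $t=0$; your parallel-frame computation is exactly this. What each route buys: the paper's derivation produces the reusable criteria \eqref{eq:vRv}--\eqref{eq:RvRw} (used again in Corollary~\ref{cor:Parallel}) and explicit curvature formulas, whereas your version stays closer to $N$ and makes the equivalence with \eqref{eq:J2} and the parallelism clause conceptually transparent; but to make your comparison identity fully rigorous you would still need essentially the bookkeeping the paper packages into Proposition~\ref{prop:nabla} (properties (c), (g) and the pullback connection), so the total effort is comparable. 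Two small points to tighten when writing it up: justify the reformulation of (I) at points where $\kappa_1$ vanishes (the equivalence with constant $\kappa_1$, $\kappa_2=0$ is immediate but should be stated), and note explicitly that every triple $(x_0,v_0,\alpha_0)$ does arise as initial data of a normal extremal, as in the paper's \eqref{LambdaDet}.
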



\subsection{Description of geodesic curvatures}


Before we proceed to the proof of the main result we give some additional information about the tools used in the proof. We make a special choice of connection $\nabla$ on the manifold $M$, that will help us to perform the technical calculations. From now on, we simplify notation and write $R = R^\calD$.
Let $\nabla^N$ denote the Levi-Civita connection of $g_N$. We identify the two vector bundles ${\mathcal D}$ and $\pi^*TN$ over $M$ via the horizontal lift, described in Section~\ref{sec:Submersion}. Choose an affine connection $\nabla$ on $TM$ satisfying the following three conditions:
\begin{enumerate}[\rm (i)]
\item $\nabla_VW=(\pi^*\nabla^N)_VW$ for any $V\in\Gamma(TM)$ and $W\in\Gamma(\mathcal D)$, 
\item $\nabla g^*=0$,
\item $\nabla_VW={\rm pr}_{\mathcal V}[V,W]$ for $V\in\Gamma({\mathcal D})$ and $W\in\Gamma({\mathcal V})$.
\end{enumerate}
Here, we have identified $\pi^* TN$ with $\calD$ using the horizontal lift. With this identification, if $\nabla^N$ is an affine connection on $TN$, the pullback connection $\pi^*\nabla^N$ on $\calD$ is the unique connection satisfying
$$(\pi^*\nabla^N)_{hX} hY = h \nabla_X^N Y, \qquad (\pi^* \nabla^N)_V hX = 0,$$
for any $V \in \Gamma(\calV)$ and $X,Y \in \Gamma(TN)$.

The class of connections satisfying conditions {\rm (i)-(iii)} is not empty. A well-known connection that fulfills all the above requirements is the Bott connection associated to the subbundle ${\mathcal D}$ with any choice of an extension of the sub-Riemannian metric $g$ to a Riemannian metric $g_M$ that makes ${\mathcal D}$ and ${\mathcal V}$ orthogonal, see~\cite{Bott,GG} and Section~\ref{ssec:spaces}.

In the following proposition we collect the properties and useful consequences of the definition of the connection $\nabla$, which we leave to the reader to verify. 

\begin{proposition}\label{prop:nabla}
Let $\nabla$ be a connection on $M$ defined by (i)-(iii) and let the curve $\gamma\colon I\to M$ be the horizontal lift of $\eta\colon I\to N$. Then the following results hold:
\begin{enumerate}[\rm (a)]
\item \label{item:a}{$\nabla_VW={\rm pr}_{\mathcal D}[V,W]$ for any $V\in\Gamma(\mathcal V)$ and $W\in\Gamma(\mathcal D)$;}
\item \label{item:b}{The connection $\nabla$ and the operator $\sharp^g$ commute;}
\item \label{item:c}{ $\bnabla_{\dot \eta} \beta =0$ if and only if $\nabla_{\dot \gamma} \beta = 0$ for any $\beta \in \Gamma(\Ann(\calD))$;}
\item \label{item:d}{The torsion $T$ of $\nabla$ satisfies: $T(V,W)=0$ whenever $V\in\Gamma({\mathcal V})$ and $W\in\Gamma({\mathcal D})$;}
\item \label{item:e}{The torsion $T$ of $\nabla$ satisfies: $T(hX,hY)=-R(hX,hY)$ for all $X,Y\in\Gamma(TN)$;}
\item \label{item:f}{If $\gamma$ is a normal sub-Riemannian geodesic and $\sharp^g\lambda=\dot\gamma$, then one form $\nabla_{\dot\gamma(t)}\lambda(t)$, $t\in I$, vanishes on vertical vectors for all $t\in I$;}
\item \label{item:g}{The geodesic equation~\eqref{eq:srgeodeqT} can be written as $\nabla_{\dot\gamma}\lambda(t)=\lambda(t)R\big(\dot\gamma(t),\cdot\big).$}
\end{enumerate}
\end{proposition}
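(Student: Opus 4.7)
The plan is to dispatch items (a)--(g) in an order that lets each draw on its predecessors; every step is a routine unfolding of conditions (i)--(iii), so the main challenge is bookkeeping rather than insight. The one structural observation I would highlight, and which I expect to be the main obstacle to spot, is that a vertical vector field $V\in\Gamma(\calV)$ and any basic horizontal lift $hX$ are both $\pi$-projectable (to $0$ and to $X$ respectively), so that $[V,hX]$ is $\pi$-related to $[0,X]=0$ and is therefore vertical. This projectability fact is the crux of (d), and thereby of the cascade that leads to (a), (f), and (g).

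I would begin with (e), since it uses only (i). By the pullback characterization, $\nabla_{hX}hY = h\nabla^N_XY$, and by definition of $R$ one has $[hX,hY] = h[X,Y] + R(hX,hY)$. Combining these with the torsion-freeness of $\nabla^N$ collapses the computation to $T(hX,hY) = -R(hX,hY)$. Property (d) comes next: by $C^\infty(M)$-linearity of $T$, it suffices to take $V\in\Gamma(\calV)$ and $W=hX$. Here (i) gives $\nabla_VhX = (\pi^*\nabla^N)_VhX = 0$, (iii) gives $\nabla_{hX}V = \pr_\calV[hX,V]$, and the projectability observation gives $[hX,V]\in\Gamma(\calV)$, so the three terms of $T(V,hX)$ collapse to $\pr_\calD[hX,V]=0$. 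Property (a) then drops out of (d) and (iii) via the torsion formula: $\nabla_VW = \nabla_WV + [V,W] = -\pr_\calV[V,W] + [V,W] = \pr_\calD[V,W]$.

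Property (b) is the standard dualization of $\nabla g^*=0$: testing $\nabla_V\sharp^g\lambda$ against an arbitrary covector $\mu$ and using (ii),
\[
\mu(\nabla_V\sharp^g\lambda) = V(g^*(\lambda,\mu)) - g^*(\lambda,\nabla_V\mu) = g^*(\nabla_V\lambda,\mu) = \mu(\sharp^g\nabla_V\lambda),
\]
so $\nabla_V\sharp^g\lambda = \sharp^g\nabla_V\lambda$. For (g), decompose any $w\in T_xM$ as $w=w_\calD+w_\calV$; by (d), $T(\dot\gamma,w_\calV)=0$, and by (e), $T(\dot\gamma,w_\calD) = -R(\dot\gamma,w_\calD) = -R(\dot\gamma,w)$, since $R$ vanishes whenever either input is vertical. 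Substituting into the geodesic equation of Lemma~\ref{lemma:GG} yields (g). Property (f) is then immediate from (g): for any vertical $V$, $R(\dot\gamma,V)=0$, hence $(\nabla_{\dot\gamma}\lambda)(V) = \lambda(R(\dot\gamma,V)) = 0$.

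Finally, (c) identifies the two parallel-transport operators on $\Ann(\calD)$. For $\beta\in\Gamma(\Ann(\calD))$ and $W\in\Gamma(\calD)$, the dual connection gives $(\nabla_V\beta)(W) = V(\beta(W)) - \beta(\nabla_VW) = 0$, because $\beta(W)=0$ by definition of $\Ann(\calD)$ and $\nabla_VW\in\Gamma(\calD)$ by (i). For $W\in\Gamma(\calV)$ and $V=hX$, using (iii) gives $(\nabla_{hX}\beta)(W) = (hX)(\beta(W)) - \beta(\pr_\calV[hX,W]) = (hX)(\beta(W)) - \beta([hX,W]) = (\calL_{hX}\beta)(W)$, where the middle equality again uses that $\beta$ annihilates $\calD$. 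Combining the two cases gives $\nabla_{hX}\beta = \pr_\calV^*\calL_{hX}\beta = \bnabla_X\beta$, and restricting to the horizontal lift of $\eta$ yields (c).
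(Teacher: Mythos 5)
Your proof is correct: the paper gives no argument for this proposition (it explicitly leaves the verification to the reader), and your systematic unfolding of (i)--(iii) in the order (e), (d), (a), (b), (g), (f), (c) is exactly the intended routine check, with the one genuinely needed observation --- that $[hX,V]$ is vertical because $V\in\Gamma(\calV)$ and $hX$ are $\pi$-related to $0$ and $X$ respectively --- correctly identified and used. All individual steps (the torsion computation via $[hX,hY]=h[X,Y]+R(hX,hY)$, the dualization of $\nabla g^*=0$ for (b), the decomposition $w=w_\calD+w_\calV$ for (g), and the identification $\nabla_{hX}\beta=\pr_\calV^*\calL_{hX}\beta$ for (c)) check out.
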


The fact that the curve $\eta$ is the projection of a normal sub-Riemannian geodesic $\gamma$ under the Riemannian submersion $\pi\colon M\to N$, allows us to express the first and the second geodesic curvatures of $\eta$ in terms of the chosen connection $\nabla$ on $M$.   

\begin{lemma} \label{lemma:Computation}
Let $\gamma(t)$ be a normal sub-Riemannian geodesic, parametrized by arc length, corresponding to the extremal $\lambda(t)$. Let $\eta(t)$ be its projection to $N$, and let $\kappa_1(t)$ and $\kappa_2(t)$ be the first and the second geodesic curvatures of $\eta(t)$. Then
$$\kappa_1(t) = | \lambda(t) R(\dot \gamma, \, \cdot \, ) |_{g^*} ,$$
and at any point where $\kappa_1(t) \neq 0$,
$$\kappa_2(t) = \frac{1}{\kappa_1(t)}\Big| \lambda(t) (\nabla_{\dot \gamma} R)(\dot \gamma(t) , \, \cdot \, ) + \lambda(t) R(\nabla_{\dot \gamma} \dot \gamma(t) , \, \cdot \,) - \frac{\dot \kappa_1(t)}{\kappa_1(t)}  \lambda(t) R(\dot \gamma(t) , \, \cdot \,) + \kappa_1(t)^2 \lambda(t)\Big|_{g^*}.$$
\end{lemma}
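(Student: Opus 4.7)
The plan is to translate the Riemannian Frenet data on $N$ into sub-Riemannian data on $M$ via the horizontal lift together with the properties (a)--(g) of Proposition~\ref{prop:nabla}, and then rewrite everything in terms of $\lambda$ and $R$. Throughout I would use the identity $|\sharp^g\mu|_g=|\mu|_{g^*}$, which is immediate from~\eqref{eq:metric}, and the fact that $\pi$ restricts to a fibrewise isometry $(\calD,g)\to(TN,g_N)$, so that $g$-norms of horizontal vectors equal $g_N$-norms of their projections.

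For the first formula, property (i) identifies $\nabla_{\dot\gamma}\dot\gamma$ with the horizontal lift of $\nabla^N_{\dot\eta}\dot\eta$, giving $|\nabla_{\dot\gamma}\dot\gamma|_g=\kappa_1$. Combining property (b) with the rewriting (g) of the geodesic equation yields
$$\nabla_{\dot\gamma}\dot\gamma\;=\;\sharp^g\nabla_{\dot\gamma}\lambda\;=\;\sharp^g\bigl(\lambda R(\dot\gamma,\cdot)\bigr),$$
and applying $|\sharp^g\mu|_g=|\mu|_{g^*}$ produces the asserted expression for $\kappa_1$.

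For the second formula, I would start from a Frenet-Serret identity in $N$: differentiating $\nabla^N_{\dot\eta}e_1=\kappa_1 e_2$ and using $\nabla^N_{\dot\eta}e_2=-\kappa_1 e_1+\kappa_2 e_3$ gives
$$\kappa_1\kappa_2\,e_3\;=\;\nabla^N_{\dot\eta}\nabla^N_{\dot\eta}\dot\eta\;+\;\kappa_1^2\,\dot\eta\;-\;\frac{\dot\kappa_1}{\kappa_1}\nabla^N_{\dot\eta}\dot\eta,$$
and taking norms yields $\kappa_1\kappa_2$ on the left. Property (i) (applied twice, since $\nabla_{\dot\gamma}\dot\gamma$ is already horizontal) identifies the right-hand side with the $g$-norm of $\nabla_{\dot\gamma}\nabla_{\dot\gamma}\dot\gamma+\kappa_1^2\dot\gamma-\frac{\dot\kappa_1}{\kappa_1}\nabla_{\dot\gamma}\dot\gamma$. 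Using property (b), I would then factor $\sharp^g$ out of every term (writing $\dot\gamma=\sharp^g\lambda$ and $\nabla_{\dot\gamma}\dot\gamma=\sharp^g(\lambda R(\dot\gamma,\cdot))$) and expand $\nabla_{\dot\gamma}(\lambda R(\dot\gamma,\cdot))$ by the Leibniz rule, reading $\lambda R(\dot\gamma,\cdot)$ as the $1$-form $Y\mapsto\lambda(R(\dot\gamma,Y))$.

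The only non-routine point is the fate of the Leibniz term $(\nabla_{\dot\gamma}\lambda)R(\dot\gamma,\cdot)$. By property (g), $\nabla_{\dot\gamma}\lambda=\lambda R(\dot\gamma,\cdot)$, so this term evaluates on $Y$ as $\lambda\bigl(R(\dot\gamma,R(\dot\gamma,Y))\bigr)$; since $R(\dot\gamma,Y)\in\calV$ and the definition $R(V,W)=\pr_{\calV}[\pr_{\calD}V,\pr_{\calD}W]$ vanishes whenever one argument is vertical, this term is identically zero. The remaining two Leibniz terms produce $\lambda(\nabla_{\dot\gamma}R)(\dot\gamma,\cdot)+\lambda R(\nabla_{\dot\gamma}\dot\gamma,\cdot)$, and assembling all four contributions inside $\sharp^g$ and applying $|\sharp^g\mu|_g=|\mu|_{g^*}$ delivers the stated formula for $\kappa_2$. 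The main obstacle is purely bookkeeping in the Leibniz expansion; the conceptual key is the vanishing just described, which is what allows $\nabla_{\dot\gamma}\nabla_{\dot\gamma}\dot\gamma$ to be expressed without any second-order information on $\lambda$.
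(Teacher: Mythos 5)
Your proposal is correct and follows essentially the same route as the paper: compute $\nabla_{\dot\gamma}\dot\gamma$ and $\nabla_{\dot\gamma}\nabla_{\dot\gamma}\dot\gamma$ via the geodesic equation in the form of Proposition~\ref{prop:nabla}\,\eqref{item:g}, identify them with the Frenet data of $\eta$ through the pullback connection, kill the Leibniz term $(\nabla_{\dot\gamma}\lambda)R(\dot\gamma,\cdot)$ (your argument via $R$ vanishing on vertical arguments is equivalent to the paper's appeal to $\nabla_{\dot\gamma}\lambda$ annihilating $\calV$), and convert $g$-norms of horizontal vectors into $g^*$-norms of covectors. No gaps.
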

\begin{proof}
Define $e_1(t) = \dot \eta(t)$ and observe that $|\dot\gamma(t)|_{g}=|\dot \eta(t)|_{g_N}=1$. Then
\begin{eqnarray*}
d\pi \nabla_{\dot \gamma} \dot \gamma(t) 
& = & \nabla_{\dot \eta}^N e_1(t) = \kappa_1(t) e_2(t), 
\\
d\pi \nabla_{\dot \gamma} \nabla_{\dot \gamma} \dot \gamma(t)
& = & \nabla_{\dot \eta}^N \kappa_1(t) e_2(t) = \dot \kappa_1(t) e_2(t) + \kappa_1(t) \kappa_2(t) e_3(t) - \kappa_1(t)^2 e_1(t),
\end{eqnarray*}
by the definition of the pullback connection and equations~\eqref{eq:Kappa2} and \eqref{eq:KappaJ}.
Using the geodesic equation in the form \eqref{item:g} from Proposition~\ref{prop:nabla}, we have
$$\nabla_{\dot \gamma} \dot \gamma(t) = \sharp^g \nabla_{\dot \gamma} \lambda(t) = \sharp^g \lambda(t) R(\dot \gamma(t), \, \cdot \,).$$
and
\begin{equation}\label{eq:ggg}
\nabla_{\dot \gamma} \nabla_{\dot \gamma} \dot \gamma(t) =  \sharp^g \lambda(t) (\nabla_{\dot \gamma} R)(\dot \gamma(t) , \, \cdot \, ) + \sharp^g \lambda(t) R(\nabla_{\dot \gamma} \dot \gamma(t) , \, \cdot \,).
\end{equation}
In the latter equation, we have used that $\sharp^g (\nabla_{\dot \gamma} \lambda)(t) R(\dot \gamma(t) , \, \cdot \, ) = 0$ since $\nabla_{\dot \gamma} \lambda(t)$ vanishes along $\calV$ by~property \eqref{item:d} from Proposition~\ref{prop:nabla}.

It follows from the definitions of geodesic curvature, pullback connection, cometric, and the properties \eqref{item:c} and \eqref{item:g} that
$$
\kappa_1(t) = \left| \nabla^N_{\dot\eta}\dot\eta\right|_{g_N}
=\left|\nabla_{\dot\gamma}\dot\gamma\right|_{g}
=\left|\sharp^{g}\nabla_{\dot\gamma}\dot\gamma\right|_{g^*}
=\left|\nabla_{\dot\gamma}\lambda\right|_{g^*}
=\left| \lambda(t) R( \dot \gamma(t), \,\cdot \, )\right|_{g^*}
$$
At any point where $\kappa_1(t) \neq 0$, we can define $e_2$ by
\begin{equation}\label{eq:e2}
h_{\gamma(t)} e_2(t) = \frac{1}{\kappa_1(t)} \sharp^g\lambda(t) R( \dot \gamma(t), \, \cdot \, ).
\end{equation}
 
Finally, we have
\begin{equation} \label{eq:Kappa1Kappa2}\kappa_1(t) \kappa_2(t) h_{\gamma(t)} e_3(t) =  \nabla_{\dot \gamma} \nabla_{\dot \gamma} \dot \gamma(t) - \dot \kappa_1(t) h_{\gamma(t)} e_2(t) + \kappa_1(t)^2h_{\gamma(t)} e_1.\end{equation}
The result follows from~\eqref{eq:ggg} and~\eqref{eq:e2}.
\end{proof}

The following lemma gives the necessary and sufficient criterion for statement (I') of Theorem~\ref{th:main1} in terms of the connection $\nabla$.

\begin{lemma} \label{lemma:Geodesic} 
The projection of any normal sub-Riemannain geodesic has constant first geodesic curvature $\kappa_1$ if and only if for any $(\alpha, v) \in \Ann(\calD_x) \oplus \calD_x$, $x\in M$, we have
\begin{equation} \label{dotKappaGeneral} 
\langle \alpha R(v, \cdot) , \alpha(\nabla_v R)(v, \, \cdot \,)\rangle_{g^*} = 0.
\end{equation}
\end{lemma}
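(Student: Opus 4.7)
The plan is to differentiate the identity $\kappa_1(t)^2=|\lambda(t)R(\dot\gamma(t),\cdot)|_{g^*}^2$ from Lemma~\ref{lemma:Computation} along a normal sub-Riemannian geodesic $\gamma$ with extremal $\lambda$, and to extract the pointwise condition at $t=0$. Since $\nabla g^*=0$, I would start from
\begin{equation*}
\tfrac{1}{2}\tfrac{d}{dt}\kappa_1(t)^2 = \langle \nabla_{\dot\gamma}\bigl(\lambda R(\dot\gamma,\cdot)\bigr),\, \lambda R(\dot\gamma,\cdot)\rangle_{g^*},
\end{equation*}
and expand $\nabla_{\dot\gamma}\bigl(\lambda R(\dot\gamma,\cdot)\bigr)$ by the Leibniz rule into three terms: $(\nabla_{\dot\gamma}\lambda)\,R(\dot\gamma,\cdot)$, $\lambda\,(\nabla_{\dot\gamma}R)(\dot\gamma,\cdot)$, and $\lambda\,R(\nabla_{\dot\gamma}\dot\gamma,\cdot)$.

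Two of these contributions should drop out. The first is identically zero: by Proposition~\ref{prop:nabla}\eqref{item:e} the curvature $R$ is $\calV$-valued, and by \eqref{item:f} the one-form $\nabla_{\dot\gamma}\lambda$ annihilates $\calV$. For the third term, property \eqref{item:g} together with \eqref{item:b} gives $\sharp^g(\lambda R(\dot\gamma,\cdot)) = \sharp^g\nabla_{\dot\gamma}\lambda = \nabla_{\dot\gamma}\dot\gamma$, hence
\begin{equation*}
\langle \lambda R(\nabla_{\dot\gamma}\dot\gamma,\cdot),\, \lambda R(\dot\gamma,\cdot)\rangle_{g^*} = \lambda R(\nabla_{\dot\gamma}\dot\gamma,\nabla_{\dot\gamma}\dot\gamma) = 0
\end{equation*}
by antisymmetry of $R$. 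The derivative therefore reduces to
\begin{equation*}
\tfrac{1}{2}\tfrac{d}{dt}\kappa_1^2 = \langle \lambda(\nabla_{\dot\gamma}R)(\dot\gamma,\cdot),\, \lambda R(\dot\gamma,\cdot)\rangle_{g^*}.
\end{equation*}

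Finally I would observe that both one-forms in this pairing are insensitive to the $\Ann(\calV)$-component of $\lambda$. For $\lambda R(v,\cdot)$ this is immediate from $R$ being $\calV$-valued; for $\lambda(\nabla_v R)(v,\cdot)$ it follows from the expansion $(\nabla_v R)(v,w) = \nabla_v(R(v,w)) - R(\nabla_v v,w) - R(v,\nabla_v w)$, where the last two terms are vertical and the first is also vertical because condition (iii) forces $\nabla$ to preserve $\calV$ along horizontal directions. Hence replacing $\lambda$ by its $\Ann(\calD)$-component $\alpha$ does not change the expression. At any fixed $x\in M$, an initial unit-speed extremal can be chosen so that $\alpha\in\Ann(\calD_x)$ and $v=\dot\gamma(0)\in\calD_x$ with $|v|_g=1$ are independently arbitrary, and the bilinear expression is cubic homogeneous in $v$, so constancy of $\kappa_1$ along every normal sub-Riemannian geodesic is equivalent to \eqref{dotKappaGeneral} for all $(\alpha,v)\in\Ann(\calD_x)\oplus\calD_x$.

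The main obstacle I anticipate is bookkeeping rather than conceptual: in the partially contracted tensor $\lambda R(\dot\gamma,\cdot)$ the covector $\lambda$, the vector-valued two-form $R$, and the vector $\dot\gamma$ all sit in different slots, so one has to state the Leibniz expansion unambiguously in order to attach properties \eqref{item:b}, \eqref{item:e}, \eqref{item:f}, \eqref{item:g} of Proposition~\ref{prop:nabla} to precisely the right term; once that is done, the three vanishing arguments above are direct.
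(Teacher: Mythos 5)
Your proposal is correct and follows essentially the same route as the paper: differentiate $\kappa_1^2=|\lambda R(\dot\gamma,\cdot)|_{g^*}^2$ using $\nabla g^*=0$, kill the $(\nabla_{\dot\gamma}\lambda)$-term by verticality of $R$ together with Proposition~\ref{prop:nabla}\eqref{item:f}, kill the $\lambda R(\nabla_{\dot\gamma}\dot\gamma,\cdot)$-term by skew-symmetry of $R$ as in \eqref{eq:rgrg}, and reduce to $t=0$ with arbitrary initial data $(\alpha,v)$ as in \eqref{LambdaDet}. The only cosmetic difference is that you justify explicitly why $\lambda$ may be replaced by its $\Ann(\calD)$-component $\alpha$ (a point the paper leaves implicit), and your citation of \eqref{item:e} for $R$ being $\calV$-valued should really just be the definition of $R^{\calD}$.
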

\begin{proof} Lemma~\ref{lemma:Computation} shows that the geodesic curvatures are completely defined by the behavior of the extremal $\lambda$ and, therefore, it is enough to give the conditions only for $t=0$. We choose arbitrary $(\alpha, v) \in \Ann(\calD_x) \oplus \calD_x$, $x\in M$, $|v|_g=1$, and find the unique normal extremal $\lambda\colon I \to T^*M$  such that 
\begin{equation} \label{LambdaDet}
\lambda(0) \in T_x^*M,\qquad \mathrm{pr}_{\calV}^* \lambda(0) = \alpha, \qquad \sharp^g \lambda(0) = v. \end{equation}
Let $\gamma(t)$ be the projection of $\lambda(t)$ to $M$ and let $\eta(t)$ be the projection of $\gamma(t)$ to $N$. We want to find conditions for the first geodesic curvature $\kappa_1$ to be constant for all choices of $\alpha$ and $v$. It is clearly sufficient to look at $\dot \kappa_1(0)$.

Using Lemma~\ref{lemma:Computation}, we have that
\begin{eqnarray} \label{dotKappa}  
\kappa(t) \dot \kappa_1(t) 
& = &  \langle \lambda(t) R(\dot \gamma(t), \, \cdot \,) , \lambda(t) (\nabla_{\dot \gamma}R)(\dot \gamma(t), \, \cdot \, ) + \lambda(t) R(\nabla_{\dot \gamma} \dot \gamma(t) , \, \cdot \,) \rangle_{g^*} \\ \nonumber
&= &\langle \lambda(t) R(\dot \gamma(t), \, \cdot \,) , \lambda(t) (\nabla_{\dot \gamma}R)(\dot \gamma(t), \, \cdot \, ) \rangle_{g^*}.
\end{eqnarray}
Here, we have used that
\begin{align}\label{eq:rgrg} 
& \langle \lambda(t) R(\dot \gamma(t), \, \cdot \,) ,  \lambda(t) R(\nabla_{\dot \gamma} \dot \gamma(t) , \, \cdot \,) \rangle_{g^*}  \nonumber
\\ 
& = \lambda(t) R\Big(\nabla_{\dot \gamma} \dot \gamma(t) , \sharp^g \lambda(t) R(\dot \gamma(t), \, \cdot \,) \Big) = \lambda(t) R(\nabla_{\dot \gamma} \dot \gamma(t), \nabla_{\dot \gamma} \dot \gamma(t))  = 0. 
\end{align}
Evaluating~\eqref{dotKappa} at $t=0$ we finish the proof, because $\gamma$ is a normal sub-Riemannian geodesic and $\sharp^g\lambda(t)=\dot\gamma(t)$.
 \end{proof}

Analogously we formulate the criterion for statement (I) in Theorem~\ref{th:main2} by using the connection $\nabla$.

\begin{lemma}\label{lemma:Geodesic_b} The projection of any normal sub-Riemannain geodesic has constant first geodesic curvature $\kappa_1$ and vanishing second geodesic curvature $\kappa_2=0$ if and only if for any $(\alpha, v) \in \Ann(\calD_x) \oplus \calD_x$, $x \in M$, we have
\begin{eqnarray}\label{eq:vRv}
\alpha (\nabla_{v} R)(v , w) & = & 0, \qquad \text{for any\ \  $w \in T_xM$,}\\
\label{eq:RvRw}
\langle  \alpha R(v, \, \cdot \, ) ,\alpha R(w, \, \cdot \, ) \rangle_{g^*} & = &0, \qquad w \in \left(\spn \left\{ v, \sharp^g \alpha R(v, \, \cdot \,) \right\} \right)^\perp.
\end{eqnarray} 
\end{lemma}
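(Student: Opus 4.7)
The plan is to follow the template of Lemma~\ref{lemma:Geodesic}: rewrite the combined condition ``$\kappa_1$ constant and $\kappa_2 = 0$'' via Lemma~\ref{lemma:Computation} as a pointwise identity at $t=0$ along an arbitrary normal extremal $\lambda(\cdot)$ with initial data $\sharp^g\lambda(0)=v$, $\pr_\calV^*\lambda(0)=\alpha$, where $(\alpha,v)\in\Ann(\calD_x)\oplus\calD_x$ is arbitrary and $|v|_g=1$. By the flow argument of Section~\ref{ssec:prelim}, it suffices to verify the criterion at $t=0$ for all such data.

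Assuming first that $\kappa_1(0)\neq 0$, I would use that $\dot\kappa_1\equiv 0$ is guaranteed by Lemma~\ref{lemma:Geodesic}, so that the formula of Lemma~\ref{lemma:Computation} makes $\kappa_2(0)=0$ equivalent to the covector
$$\lambda(0)(\nabla_{\dot\gamma}R)(\dot\gamma,\cdot)+\lambda(0)R(\nabla_{\dot\gamma}\dot\gamma,\cdot)+\kappa_1(0)^2\lambda(0)$$
vanishing on $\calD$. Substituting $\nabla_{\dot\gamma}\dot\gamma(0)=\sharp^g\alpha R(v,\cdot)$ from Proposition~\ref{prop:nabla}\eqref{item:g}, using that $R(\cdot,\cdot)$ vanishes whenever any argument is vertical (so only the $\alpha$-part of $\lambda(0)$ contributes to the first two terms), and using $\kappa_1(0)^2=|\alpha R(v,\cdot)|_{g^*}^2$, the criterion reduces to demanding, for every $w\in\calD_x$,
$$\alpha(\nabla_v R)(v,w)+\alpha R\bigl(\sharp^g\alpha R(v,\cdot),w\bigr)+|\alpha R(v,\cdot)|_{g^*}^2\langle v,w\rangle_g=0.$$

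Next I would exploit the arbitrariness of $\alpha$: replacing $\alpha$ by $c\alpha$ produces a polynomial in the scalar $c$ whose linear and quadratic parts must vanish separately. The linear part yields $\alpha(\nabla_v R)(v,w)=0$ for $w\in\calD_x$, and extends to all $w\in T_xM$ since $R$ vanishes on vertical arguments and the connection $\nabla$ preserves the splitting $TM=\calD\oplus\calV$ by (i) and (iii), forcing $(\nabla_v R)(v,w)\equiv 0$ whenever $w\in\calV$. This is exactly \eqref{eq:vRv}. For the quadratic part, set $u:=\sharp^g\alpha R(v,\cdot)$; using the skew-symmetry of $R$ together with $g^*(\omega_1,\omega_2)=\omega_1(\sharp^g\omega_2)$, one gets $\alpha R(u,w)=-\alpha R(w,u)=-\langle\alpha R(v,\cdot),\alpha R(w,\cdot)\rangle_{g^*}$, and the quadratic condition becomes
$$\langle\alpha R(v,\cdot),\alpha R(w,\cdot)\rangle_{g^*}=|\alpha R(v,\cdot)|_{g^*}^2\langle v,w\rangle_g,\qquad w\in\calD_x.$$
Finally I would verify this is equivalent to \eqref{eq:RvRw}: decomposing any $w\in\calD_x$ as $w=av+bu+w^\perp$ with $w^\perp\perp\spn\{v,u\}$, both sides collapse to $a\,|\alpha R(v,\cdot)|_{g^*}^2$ once one invokes $\langle u,v\rangle_g=\alpha R(v,v)=0$ and $\langle\alpha R(v,\cdot),\alpha R(u,\cdot)\rangle_{g^*}=\alpha R(u,u)=0$ from skew-symmetry.

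The main obstacle is the careful bookkeeping between the degenerate $g^*$-seminorm and the horizontal/vertical splittings: one must check that the middle term $\lambda(0)R(\nabla_{\dot\gamma}\dot\gamma,\cdot)$ genuinely collapses to $\alpha R(\sharp^g\alpha R(v,\cdot),\cdot)$, with no horizontal component of $\lambda(0)$ surviving, and that the case $\kappa_1(0)=0$ is handled correctly. The latter is benign, since there $\alpha R(v,\cdot)$ vanishes on $\calD$, making \eqref{eq:RvRw} vacuous and reducing both the combined criterion and \eqref{eq:vRv} consistently to the remaining linear piece.
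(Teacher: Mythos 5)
Your proposal is correct and takes essentially the same route as the paper: reduce to $t=0$ via Lemma~\ref{lemma:Computation} and \eqref{eq:Kappa1Kappa2}, split the resulting pointwise identity into its pieces homogeneous of degree one and two in $\alpha$ (the paper does this by substituting $\alpha\mapsto-\alpha$, you by scaling $\alpha\mapsto c\alpha$, which is the same trick), and then match these pieces with \eqref{eq:vRv} and \eqref{eq:RvRw} through the decomposition of $w$ along $v$, $\sharp^g\alpha R(v,\cdot)$ and their orthogonal complement. Your brief remark on the degenerate case $\kappa_1(0)=0$ is no less (and no more) detailed than the paper's treatment, which silently assumes $\kappa_1(0)\neq 0$, so this is not a point of divergence.
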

\begin{proof}
If we assume that the first geodesic curvature of any projection of a normal sub-Riemannian geodesic is constant, then we obtain that $\alpha R(v, w)= \langle \sharp^g \alpha R(v, \, \cdot \, ), w \rangle_{g} = 0$ for any $w$ parallel to $\sharp^g \alpha R(v, \, \cdot \,)$ by Lemma~\ref{lemma:Geodesic}. Hence, we only need to prove that \eqref{eq:vRv} and \eqref{eq:RvRw} hold for $w \in \left(\spn \left\{ v, \sharp^g \alpha R(v, \, \cdot \,) \right\} \right)^\perp$.

Choose an arbitrary $(\alpha, v) \in \Ann(\calD)_x \oplus \calD_x$, $x \in M$, with $|v|_g =1$, and let $\lambda(t)$ and $\gamma(t)$ be the extremal and the normal sub-Riemannian geodesic, respectively, as determined by~\eqref{LambdaDet}. Let $\kappa_1$ and $\kappa_2$ be the first and second geodesic curvatures of the projection $\eta$ of $\gamma$. We only need to consider the case $t=0$, taking an arbitrary $w\in T_xM$ orthogonal to $v$ and $\sharp^g \alpha R(v, \, \cdot \,)$. We choose an orthonormal basis $v_1, \dots, v_n$ of $\calD_x$. Note the identities, 
\begin{eqnarray*}
\nabla_{\dot \gamma} \nabla_{\dot \gamma} \dot \gamma(0) & = & \sharp^g \alpha (\nabla_{v} R)(v , \, \cdot \, ) + \sum_{i=1}^n \alpha R(v, v_i) \sharp^g \alpha R(v_i , \, \cdot \,) ,\\
\dot \kappa_1(0) h_{x} e_2 & = & \frac{\dot \kappa_1(0)}{\kappa_1(0)} \sharp^g \alpha R(v, \, \cdot \,), \\
\kappa_1(0)^2 h_{x} e_1 & = & \kappa_1(0)^2 v,
\end{eqnarray*}
obtained by evaluating~\eqref{eq:ggg} and~\eqref{eq:e2} at $t=0$. By using~\eqref{eq:Kappa1Kappa2}, we have that $\kappa_2(0) = 0$ if and only if
\begin{equation} \label{Comp1} \sharp^g \alpha (\nabla_{v} R)(v , \, \cdot \, ) + \sum_{i=1}^n \alpha R(v, v_i) \sharp^g \alpha R(v_i , \, \cdot \,) - \frac{\dot \kappa_1(0)}{\kappa_1(0)} \sharp^g \alpha R(v, \, \cdot \,) + \kappa_1(0)^2 v =0. \end{equation}
Next, observe that both $\kappa_1(0)$ and $\dot \kappa_1(0)$ will have the same value if we change $\alpha$ to $-\alpha$. Evaluating \eqref{Comp1} at $(\alpha, v)$ and $(-\alpha, v)$, and taking sums and differences, we get
$$-\sharp^g \alpha (\nabla_{v} R)(v , \, \cdot \, ) + \frac{\dot \kappa_1(0)}{\kappa_1(0)} \sharp^g \alpha R(v, \, \cdot \,)  =0,$$
$$ \sum_{i=1}^n \alpha R(v, v_i) \sharp^g \alpha R(v_i , \, \cdot \,) + \kappa_1(0)^2 v =0.$$
Evaluating at some $w\in \left(\spn \left\{ v, \sharp^g \alpha R(v, \, \cdot \,) \right\} \right)^\perp$, we obtain the result.
\end{proof}


\subsection{Proof of Theorem~\ref{th:main1}}


By Lemma~\ref{lemma:Geodesic}, Item~(I') is equivalent to \eqref{dotKappaGeneral}, so we only need to show that the latter is equivalent to Item~(II'). Since we are considering only geodesics and $\bnabla$-parallel transports, is sufficient to consider property (II') only at $t=0$.

For an arbitrary element $\alpha \in \Ann(\calD)_x$ and $v \in T_y N$, $\pi(x) = y \in N$, let $\eta$ be the geodesic with initial condition $\eta(0) = y$ and $\dot \eta(0) = v$. Let $\gamma$ be the horizotal lift of $\eta$ to $x$ and define $\beta(t)$ along $\gamma$ by
$$
\bnabla_{\dot\eta}\beta=0,\quad\beta(0)=\alpha.
$$
The condition $\nabla^N_{\dot\eta(t)}{\dot\eta}(t)=0$ implies $\nabla_{\dot\gamma(t)}\dot\gamma(t)=0$ by the definition of the pullback connection. 
The definition of the operator $J_{\beta}$ implies that $\sharp^{g}\big(\beta R(v,\cdot)\big)=hJ_{\beta}v(\cdot)$ and since $\dot\eta$, $\dot\gamma$, and $\dot\beta$ are parallel vector fields , we obtain 
\begin{eqnarray*}
\frac{1}{2} \partial_t |J_{\beta(t)} \dot \eta(t) |^2 
&= &\langle J_{\beta(t)} \dot \eta(t), \nabla_{\dot \eta}^N J_{\beta(t)} \dot \eta(t)\rangle_{g_N} \nonumber
\\ 
&=&
\langle hJ_{\beta(t)} \dot \eta(t), h\nabla_{\dot \eta}^N J_{\beta(t)} \dot \eta(t)\rangle_{g} \nonumber
\\ 
&= &\langle \beta(t) R(\dot \gamma(t), \cdot), \nabla_{\dot \gamma}\beta(t) R(\dot \gamma(t), \cdot) \rangle_{g^*} 
\\ 
&=&
\langle \beta(t) R(\dot \gamma(t), \cdot), \beta(t) (\nabla_{\dot \gamma(t)} R)(\dot \gamma(t), \cdot) \rangle_{g^*}\nonumber
\end{eqnarray*}
and
\begin{eqnarray} \label{eq:DerJ} 
\frac{1}{2} \partial_t |J_{\beta(t)} \dot \eta(t) |^2 |_{t=0} & = &
\langle \alpha R(h_xv, \cdot), \alpha(\nabla_{h_xv} R)(h_xv, \cdot) \rangle_{g^*}.
\end{eqnarray}
The equation \eqref{eq:DerJ} shows the equivalence between (II') and \eqref{dotKappaGeneral} and the proof is completed.\qed


\subsection{Proof of Theorem~\ref{th:main2}}

By Lemma~\ref{lemma:Geodesic_b}, we only need to show the equivalence of both \eqref{eq:vRv} and \eqref{eq:RvRw} holding and \eqref{item:II}. 
Let $\eta$ be a geodesic on $N$ and $\beta$ chosen such that $\bnabla_{\dot \eta} \beta = 0$. Let $\gamma$ be a horizontal lift of $\eta$. These curves are uniquely defined by the initial data
$$
y\in N, \ x\in M_y,\ (\alpha, h_x v)\in\Ann(\calD_x) \oplus \calD_x,\quad \eta(0)=y, \ \beta(0)=\alpha,\ \gamma(0)=x.
$$ 
For any vector $w \in T_yN$ we obtain 
\begin{eqnarray*}
\langle w,\nabla_{\dot \eta}^N J_{\beta(t)} \dot \eta(t) |_{t=0}\rangle_{g_N} 
&=&\langle h_xw, \sharp^g\beta(t) (\nabla_{\dot \gamma} R)(\dot \gamma(t), \, \cdot \,) |_{t=0} \rangle_{g}
\\
&=&
\langle h_xw, \sharp^g \alpha (\nabla_{h_x v} R)(h_x v, \, \cdot \,) \rangle_{g}=0,
\end{eqnarray*}
Hence, the property~\eqref{eq:vRv}  is equivalent to the statement that all the vector fields $\nabla_{\dot \eta}^N J_{\beta(t)} \dot \eta(t)$ are parallel. 

Finally, we let $w_1, \dots, w_k$ be an orthonormal basis of the complement of $\spn \{ v, J_\alpha v\}$, where $v$ is of unit length. Here $k=n-1$ if $J_\alpha v = 0$ and otherwise, $k=n-2$. Observe that $\langle J_\alpha^2 v, J_\alpha v \rangle_{g_N} = 0$ from the skew-symmetry of $J_\alpha$.
Hence
$$J_\alpha^2 v= \langle J_\alpha^2 v, v \rangle_{g_N} v+ \sum_{i=1}^k \langle J_\alpha^2 v, w_i  \rangle_{g_N} w_i = - | J_\alpha v|^2_{g_N} v -\sum_{i=1}^k \langle J_\alpha v, J_\alpha w_i  \rangle_{g_N} w_i.$$
Furthermore,
$$\langle J_\alpha v, J_\alpha w_i \rangle_{g_N} = \langle \alpha R(h_x v, \, \cdot \,) , \alpha R(h_x w_i , \, \cdot \, ) \rangle_{g^*},$$
and since $(\spn \{ h_x v, \sharp^g  \alpha R(h_x v, \, \cdot \, )\})^\perp = \spn\{ h_x w_1, \dots, h_x w_k\} \oplus \calV_x$, the equivalence of \eqref{eq:RvRw} and \eqref{eq:J2} follows. This result completes the proof.\qed


\subsection{Projected geodesics and parallel curvature}


We remark the following consequence relating results on the projections of normal geodesics and curvature.
\begin{corollary} \label{cor:Parallel}
Assume that the projection of any normal sub-Riemannian geodesic in $(M, \calD, g)$ is a curve with constant first geodesic curvature and vanishing second geodesic curvature. Then
\begin{equation} \label{eq:vR} \nabla_v R = 0 \qquad \text{for any } v \in \calD.\end{equation}
\end{corollary}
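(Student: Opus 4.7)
The plan is to derive the tensor identity $\nabla_vR=0$ from the pointwise identity $\alpha(\nabla_vR)(v,w)=0$ supplied by Theorem~\ref{th:main2} and Lemma~\ref{lemma:Geodesic_b}, via a combination of polarization, antisymmetry of $R$, and a second Bianchi-type identity. First I would observe that, for $v\in\calD$, the tensor $\nabla_vR$ takes values in $\calV$: $R$ is $\calV$-valued by definition, and property~(iii) of $\nabla$ guarantees that $\nabla_v$ maps vertical sections to vertical ones. Consequently every term in the tensorial expansion $(\nabla_vR)(w_1,w_2)=\nabla_v(R(w_1,w_2))-R(\nabla_v w_1,w_2)-R(w_1,\nabla_v w_2)$ lies in $\calV$. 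Since $\Ann(\calD_x)$ separates points of $\calV_x$, the identity $\alpha(\nabla_vR)(v,w)=0$ then yields $(\nabla_vR)(v,w)=0$ for every $v\in\calD_x$ and $w\in T_xM$.

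I would then reduce the problem to horizontal arguments. Using once more that $R$ kills any vertical input and that $\nabla_v$ preserves both $\calD$ (property~(i)) and $\calV$ (property~(iii)) when $v\in\calD$, each of the three terms of $(\nabla_vR)(w_1,w_2)$ vanishes as soon as $w_1$ or $w_2$ is vertical. Thus it suffices to prove that $F(v_1,v_2,v_3):=(\nabla_{v_1}R)(v_2,v_3)$ vanishes on $\calD_x\times\calD_x\times\calD_x$. Polarizing $(\nabla_vR)(v,w)=0$ in $v$ gives $F(v_1,v_2,v_3)=-F(v_2,v_1,v_3)$, while the antisymmetry of $R$ in its two slots gives $F(v_1,v_2,v_3)=-F(v_1,v_3,v_2)$. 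Composing these two involutions shows that $F$ is invariant under the cyclic permutation $(v_1,v_2,v_3)\mapsto(v_2,v_3,v_1)$.

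The remaining and most delicate step, which I expect to be the main obstacle, is a second Bianchi-type identity
\[\sum_{\mathrm{cyc}}(\nabla_{X_1}R)(X_2,X_3)=0,\qquad X_1,X_2,X_3\in\Gamma(\calD).\]
I would prove it by extending $v_i\in\calD_x$ to basic horizontal fields $X_i=hX_i'$, expanding each $(\nabla_{X_i}R)(X_j,X_k)$ with the help of properties~(i) and~(iii), substituting $[X_i,X_j]=h[X_i',X_j']+R(X_i,X_j)$, and exploiting the Jacobi identity on $M$ together with the torsion-freeness of $\nabla^N$. The cyclic sums of $R(h[X_i',X_j'],X_k)$ produced in this way should cancel exactly against the Jacobi contribution $\sum_{\mathrm{cyc}}\pr_\calV[X_1,R(X_2,X_3)]$. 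Once this Bianchi identity is established, cyclic invariance of $F$ collapses its left-hand side to $3F(v_1,v_2,v_3)$, forcing $F\equiv0$ and thereby $\nabla_vR=0$ for every $v\in\calD$.
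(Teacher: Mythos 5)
Your argument is correct, and its skeleton is the same as the paper's: deduce $(\nabla_vR)(v,\cdot)=0$ from Lemma~\ref{lemma:Geodesic_b} (using that $\nabla_vR$ is $\calV$-valued and that $\Ann(\calD)$ separates vertical vectors), observe that the resulting trilinear map on $\calD$ is alternating, and kill it with the cyclic identity $\mathfrak{S}\,(\nabla_{v_1}R)(v_2,v_3)=0$, since for an alternating map the cyclic sum is three times the map. The only genuine divergence is how that cyclic identity is obtained. The paper derives it from the first Bianchi identity for the connection with torsion, using $T(hX,hY)=-R(hX,hY)$, $T(\calV,\calD)=0$, and splitting the identity into its $\calD$- and $\calV$-components; you instead propose a direct verification with basic horizontal lifts, and that computation does close: writing $(\nabla_{hX}R)(hY,hZ)=\pr_\calV[hX,R(hY,hZ)]-R(h\nabla^N_XY,hZ)-R(hY,h\nabla^N_XZ)$ and $R(hY,hZ)=[hY,hZ]-h[Y,Z]$, the triple brackets cancel in the cyclic sum by the Jacobi identity, and the remaining terms cancel pairwise once $[Y,Z]$ is replaced by $\nabla^N_YZ-\nabla^N_ZY$, exactly as you anticipate. (What you call a ``second Bianchi-type identity'' is really the vertical part of the first Bianchi identity for a connection with torsion, which is the paper's phrasing; the content is the same, yours being the more elementary, self-contained route.) You are also somewhat more explicit than the paper in recording that $(\nabla_vR)(w_1,w_2)=0$ whenever an argument is vertical, a step needed to pass from vanishing on horizontal triples to the full statement $\nabla_vR=0$, and which the paper leaves implicit.
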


\begin{proof}
We will show that the statement $(\nabla_vR)(v,\, \cdot\, )=0$ for any $v\in \mathcal{D}$ implies that $\nabla_vR=0$ for any $v\in \mathcal{D}$. The result then follows from Lemma~\ref{lemma:Geodesic_b}.

Let $R^\nabla$ denote the curvature tensor defined by the connection $\nabla$: $R^{\nabla}(X,Y)=\nabla_X\nabla_Y-\nabla_Y\nabla_X-\nabla_{[X,Y]}$. Then the torsion $T$ of the connection $\nabla$ and $R^{\nabla}$ are related by Bianchi's first identity 
\begin{equation}\label{1}
\mathfrak S R^{\nabla}(u,v)w=\mathfrak S T(T(u,v),w)+\mathfrak S\big((\nabla_uT)(v,w)\big),
\end{equation}
where $\mathfrak S$ is the cyclic sum.

Applying~\eqref{1} to $u,v,w\in \mathcal{D}$ and, making use of property (5) of the torsion of the connection $\nabla$,  we obtain
$$
T(T(u,v),w)= R(R(u,v),w)=0
$$
by property (4). Condition (i) of the definition of the connection $\nabla$ and property (a) of Proposition~\ref{prop:nabla} implies that $R^{\nabla}(u,v)w\in \mathcal{D}$, while the vector
\begin{eqnarray*}
(\nabla_uT)(v,w)&=& - (\nabla_uR )(v,w)
\end{eqnarray*}
belongs to $\mathcal{V}$ by condition (iii). Thus, taking the projection of \eqref{1} to $\calV$,
we obtain
\begin{eqnarray}\label{eq:cyclic}
-\mathfrak S\big((\nabla_uT)(v,w)\big)&=&\mathfrak S\big((\nabla_uR)(v,w)\big)\nonumber
\\
&=&(\nabla_uR)(v,w)+(\nabla_vR)(w,u)+(\nabla_wR)(u,v)=0,
\end{eqnarray}
for any $u,v,w\in \mathcal{D}$.

Since $(\nabla_v R)(v, \cdot) = 0$ for any $v \in \mathcal{D}$, the 3-linear map $(u,v,w)\mapsto (\nabla_vR)(v,w)$ is a vector valued 3-form. It follows by \eqref{eq:cyclic} that $\mathfrak{S} (\nabla_u T)(v,w) = 3 (\nabla_uR)(v,w) =0$. 
\end{proof}


\section{Projections of constant curvature and geometry}\label{ssec:spaces}


\subsection{Canonical Riemannian extension}
Let $(M,g^*)$, or equivalently let $(M,\mathcal D,g)$, be a sub-Riemannian manifold and $\pi\colon M\to N$ a submersion. We again suppose that there is a Riemannian metric $g_N$ on $N$ such that $d\pi$ is an isometry from $\calD$ to $TN$ on every fiber. The following question was studied in~\cite{GG}: describe all possible Riemannian metrics $g_M$ on $M$ such that they are extensions of $g$ and the projection on $N$ of any Riemannian geodesic coincides with the projection of sub-Riemannian geodesics. We describe a special extension $g^*_M$ of the cometric $g^*$ which is a cometric for a Riemannian metric $g_M$ such that $g_M$ gives a positive answer to the stated question. 

From the cometric $g^*$, we get an induced degenerate inner product on $\bigwedge^2 T^*M$ by
$$\langle \zeta^1, \zeta^2 \rangle_{g^*} = \sum_{1 \leq i < j \leq n} \zeta^1(v_i, v_j) \zeta^2(v_i, v_j), \qquad \zeta^1, \zeta^2 \in \bigwedge^2 T^*_x M,$$
where $v_1, \dots, v_n$ is an orthonormal basis of $\calD_x$.
\begin{proposition} \label{prop:Rmetric}
Assume that the projections of all sub-Riemannian normal geodesics are curves of constant first geodesic curvature and vanishing second geodesic curvature.

Define $g^*_M\colon T^*M\otimes T^*M\to{\mathbb R}$ by
\begin{equation} \label{eq:g}
\langle\alpha,\beta\rangle_{g^*_M}=\langle\alpha,\beta\rangle_{g^*}+ \frac{2}{n} \langle R^*\alpha,R^*\beta\rangle_{g^*},
\end{equation}
where $R^*\alpha$ is the two-form defined by $R^*\alpha(V,W)=\alpha(R(V,W))$. Then $g_M^*$ is the cometric of a Riemannian metric $g_M$ and projections of its Riemannian geodesics have constant first geodesic curvature and vanishing second geodesic curvature. Finally, all the fibers of $\pi\colon M\to N$ are totally geodesic submanifolds of $M$.
\end{proposition}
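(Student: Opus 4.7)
The plan has three steps matching the three assertions: confirm positive definiteness of $g_M^*$ in an adapted co-frame, compare the Hamiltonian dynamics of $g_M$ with the sub-Riemannian dynamics of $g$ to obtain the geodesic-curvature statement, and derive the totally geodesic fiber property from Corollary~\ref{cor:Parallel}. At a point $x \in M$, pick an orthonormal basis $v_1, \dots, v_n$ of $\calD_x$ and a complementary basis $u_1, \dots, u_r$ of $\calV_x$ with dual co-frame $\theta^1, \dots, \theta^n, \omega^1, \dots, \omega^r$, and write $R(v_i, v_j) = R^a_{ij} u_a$. A direct computation from \eqref{eq:g} shows that in this co-frame the matrix of $g_M^*$ is block-diagonal: the $\theta$-block is $\delta^{ij}$, the mixed blocks vanish since $R^*\theta^i$ is zero on pairs of horizontal vectors, and the $\omega$-block equals $\frac{2}{n} G_{ab}$ with $G_{ab} = \sum_{i<j} R^a_{ij} R^b_{ij}$. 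The quadratic form $G$ on $\Ann(\calD_x)$ coincides, up to a positive constant, with $\alpha \mapsto |J_\alpha v|^2_{g_N}$ for any unit $v \in T_{\pi(x)} N$, which is strictly positive on nonzero $\alpha$ by \eqref{eq:J2} combined with the bracket-generating hypothesis. Inverting the block-diagonal cometric produces a Riemannian metric $g_M$ satisfying $\calD \perp_{g_M} \calV$, $g_M|_\calD = g$, and $g_M|_\calV$ dual to $\frac{2}{n} G$.

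For the second assertion, write the $g_M^*$-Hamiltonian as $H_M(\lambda) = H(\lambda) + \frac{1}{n} |R^*\lambda|^2_{g^*}$, where $H$ is the sub-Riemannian Hamiltonian. Given a $g_M$-geodesic $c(t)$ with dual covector $\mu(t) = g_M(\dot c(t), \, \cdot \,)$, split $\mu = \mu_\calD + \mu_\calV$ with $\mu_\calV \in \Ann(\calD)$ and $\mu_\calD$ annihilating $\calV$. Using Proposition~\ref{prop:nabla}, items~(\ref{item:d})--(\ref{item:g}), together with the cancellations produced by \eqref{eq:J2} when one differentiates $|R^*\mu|^2_{g^*}$ along the flow, I would show that $\mu_\calV$ remains $\bnabla$-parallel along $\pi \circ c$ and that $\mu_\calD$ drives the horizontal motion exactly as a sub-Riemannian extremal. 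Lemma~\ref{lem:Gro16} then places $\pi \circ c$ in the form of \eqref{eq:nablaeta} with a $\bnabla$-parallel annihilator, so Theorem~\ref{th:main2} delivers the required curvature property.

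For the totally geodesic fibers, the Koszul formula with vertical $V, W$ and horizontal $X$, combined with $\calD \perp_{g_M} \calV$, integrability of $\calV$, and the identity $\pr_\calV[X, V] = \nabla_X V$ from condition~(iii), simplifies to
\begin{equation*}
2\, g_M(\nabla^{g_M}_V W, X) = -(\nabla_X g_M)(V, W).
\end{equation*}
It thus suffices to show that $g_M|_\calV$ is $\nabla$-parallel along horizontal directions. Since $g_M|_\calV$ is dual to $\frac{2}{n} G$ and $G$ is built algebraically from $R$, this holds as soon as $\nabla_v R = 0$ for every $v \in \calD$, which is precisely the content of Corollary~\ref{cor:Parallel}. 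The main obstacle is Step~2: translating the Hamiltonian flow of $H_M$ into an extremal equation for $(M, \calD, g)$ requires the algebraic structure imposed by \eqref{eq:J2}, which is exactly what allows the vertical component $\mu_\calV$ to remain $\bnabla$-parallel in the presence of the additional $\frac{1}{n}|R^*\lambda|^2_{g^*}$ term in $H_M$.
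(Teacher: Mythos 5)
The decisive gap is in your first step. Nondegeneracy of $g_M^*$ amounts to the curvature $R$ being surjective onto $\calV$, i.e.\ to $\calD$ having step two, and neither \eqref{eq:J2} nor bracket generation by itself gives this: \eqref{eq:J2} is perfectly compatible with $J_\alpha=0$ for some nonzero $\alpha\in\Ann(\calD)$, and the bracket-generating hypothesis only says that iterated brackets of arbitrarily high order span $TM$. So your assertion that the vertical block $G$ is ``strictly positive on nonzero $\alpha$ by \eqref{eq:J2} combined with the bracket-generating hypothesis'' is unproved, and it is precisely here that the paper does real work: using Corollary~\ref{cor:Parallel} (namely $\nabla_vR=0$ for all horizontal $v$) one shows $[\calD,\pr_{\calV}[\calD,\calD]]\subseteq \calD+[\calD,\calD]$, hence $\calD+[\calD,\calD]$ is integrable, and bracket generation then forces $\calD+[\calD,\calD]=TM$, so ${\rm im}\,R=\calV$ and the structure is step two. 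Without that argument the positive definiteness of $g_M$ on $\calV$, and everything built on it, is unfounded.

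Your second step is also only a plan rather than a proof: the claim that along the $H_M$-Hamiltonian flow the vertical momentum $\mu_{\calV}$ remains $\bnabla$-parallel while the horizontal part obeys the sub-Riemannian extremal equation is exactly the technical heart, and you leave it as ``I would show''; note also that what makes such a comparison work is horizontal parallelism of $R$ (Corollary~\ref{cor:Parallel}), not \eqref{eq:J2}. The paper bypasses this computation entirely: once $g_M^*$ is nondegenerate, the Bott connection of $g_M$ satisfies (i)--(iii), the identity $\nabla_vR=0$ gives $\nabla g_M=0$, and \cite[Theorem~2.5]{GG} then yields in one stroke that Riemannian and sub-Riemannian geodesics have the same projections and that the fibers are totally geodesic. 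Your Koszul-formula argument for the fibers is correct as far as it goes (it reduces to $\nabla_X(g_M|_{\calV})=0$, which does follow from Corollary~\ref{cor:Parallel}), but it only recovers the last assertion and still presupposes the nondegeneracy you have not established.
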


\begin{proof}

We start the proof by showing that $g^*_M$ is non degenerate, which will imply that it corresponds to a non-degenerate Riemannian metric $g_M$.
Notice that $g_M^*$ being non degenerate is equivalent to the curvature $R$ being surjective on~${\mathcal V}$. To see this, note that $|\alpha|_{g_M^*}=0$ implies $|\alpha|_{g^*}=0$ and $R^*\alpha=0$. If $R^*\alpha=0$ then it follows that ${\rm im}\,R\subset\ker\alpha$. Since ${\mathcal V}\cap\ker\alpha$ has codimension one, there is a vector $v\notin{\rm im}\,R$. We conclude that 
$R$ is not surjective. If $R$ is surjective, then $|\alpha|_{g^*_M}\neq 0$ for any $\alpha\neq0$.

It follows from the above that we only need to check that $R$ is a surjective operator. By using the condition that $\mathcal D$ is bracket generating, we aim to prove that 
\begin{equation}\label{eq:br gen}
TM={\mathcal D}+[{\mathcal D},{\mathcal D}]={\mathcal D}\oplus{\rm im}\,R,
\end{equation}
which will imply that ${\rm im}\,R=\mathcal V$ and will show that $R$ is surjective. 

Since  
$$
[\mathcal D,[\mathcal D,\mathcal D]]=[\mathcal D,{\rm pr}_{\mathcal D}[\mathcal D,\mathcal D]]+[\mathcal D,{\rm pr}_{\mathcal V}[\mathcal D,\mathcal D]],
$$
we concentrate only on the term $[\mathcal D,{\rm pr}_{\mathcal V}[\mathcal D,\mathcal D]]$, which is the only term that could be outside $\mathcal{D} + [\mathcal{D}, \mathcal{D}]$. Observe that for any $V,W_1,W_2\in\Gamma(\mathcal D)$ we have
\begin{eqnarray*}
[V,{\rm pr}_{\mathcal V}[W_1,W_2]]&=&[V,R(W_1,W_2)]=
{\rm pr}_{\mathcal V}[V,R(W_1,W_2)]+{\rm pr}_{\mathcal D}[V,R(W_1,W_2)]
\\
&=& \nabla_VR(W_1,W_2) \mod{\mathcal D},
\end{eqnarray*}
by~(iii). Since $\nabla_VR=0$ for any $V\in\Gamma(\mathcal D)$, we see that 
$$[V,R(W_1,W_2)]=R(\nabla_VW_1,W_2)+R(W_1,\nabla_VW_2)\mod{\mathcal D}.$$ Hence we obtain that the distribution ${\mathcal D}+[{\mathcal D},{\mathcal D}]$ is integrable. This and the fact that $\mathcal D$ is bracket generating leads to~\eqref{eq:br gen}. As a byproduct we have also showed that ${\mathcal D}$ has step~2.

Let $g_M$ be the Riemannian metric associated to the non-degenerate cometric $g_M^*$ and let $\nabla^M$ be the corresponding Levi-Civita connection. The Bott connection $\nabla$ of $g_M$ is defined as
\begin{equation}\label{eq:Bott}
\nabla_VW=\left\{
\begin{array}{ll}
{\rm pr}_{\mathcal D}\nabla^M_V W&V,W\in\Gamma({\mathcal D}),\\
{\rm pr}_{\mathcal V}\nabla^M_V W&V,W\in\Gamma({\mathcal V}),\\
{\rm pr}_{\mathcal D}[V,W]&V\in\Gamma({\mathcal V}),W\in\Gamma({\mathcal D}),\\
{\rm pr}_{\mathcal V}[V,W]&V\in\Gamma({\mathcal D}),W\in\Gamma({\mathcal V}).
\end{array}\right.
\end{equation}
Since $\nabla$ satisfies (i)-(iii), we have that $\nabla_VR=0$ for $V\in\Gamma(\calD)$, and then $\nabla g_M=0$. By~\cite[Theorem~2.5]{GG} we know that projections of sub-Riemannian and Riemannian geodesics coincide. Therefore all the projections of the Riemannian geodesics will have constant geodesic curvature. The results of~\cite{GG} also show that 
the fibers of $\pi$ are totally geodesic submanifolds of $M$.
\end{proof}

\begin{remark} \label{re:Normalization}
The constant $\frac{2}{n}$ can be replaced by any other positive constant and the above results would still hold. This particular choice of the constant implies that for any $\alpha \in \Ann(\calD)_x$ with $\pi(x) =y$ and with an orthonormal basis $v_1, \dots, v_n$ of $T_y N$, we obtain
$$| \alpha |_{g_M^*}^2 = \frac{1}{n} \sum_{i,j=1}^n \alpha R(h_x v_i, h_x v_j)^2  = \frac{1}{n} \sum_{i,j=1}^n \langle J_\alpha v_i, v_j\rangle_{g_N}^2 = \frac{1}{n} \sum_{j=1}^n | J_\alpha v_i |^2_{g_N}. $$
Hence, if $|J_\alpha v|_{g_N}$ is constant for any unit length vector $v$, we have $J^2_\alpha = - |\alpha|_{g^*_M}^2 \Id$.
\end{remark}


\subsection{Local description}


We give the following local description of the sub-Riemannian spaces considered in the previous sections.
\begin{corollary}\label{cor:local}
The conditions of Theorem~\ref{th:main2} are equivalent to the fact that every $x_0$ has a neighbourhood $U$ where there exists an $\mathbb{R}^{m-n}$-valued one-form $\theta = (\theta_1, \dots. \theta_{m-n})$ satisfying
\begin{enumerate}[\rm (a)]
\item \label{local1} $\ker \theta = \calD$;
\item \label{local2} If $v \in \calD_x$ and $w \in \calV_x$, $x \in U$, then $d\theta(v,w) = 0$;
\item \label{local3} For all vector fields $X,Y, Z \in \Gamma(TN)$ we have that on $U$,
$$hX \theta(hY, hZ) = d\theta(h \nabla_X^N Y, hZ) + d\theta(hY, h \nabla^N Z);$$
\item \label{local4} If $V_1, \dots, V_n$ is an orthonormal basis of $\calD|U$ and we define matrices $A_k(x) = (A_{k,ij}(x))$ by $A_{k,ij}(x) = d\theta_k(V_i, V_j)(x)$, then $- A_k(x)^2$ is a positive semi-definite diagonal matrix.
\end{enumerate}
\end{corollary}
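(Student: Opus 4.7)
The plan is to decode each of the conditions (a)--(d) as a statement about the connection $\nabla$ and the curvature $R$ from Section~\ref{sec:proof}, and then match these with the criteria already established for Theorem~\ref{th:main2}.

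Pick any local frame $E_1,\dots,E_{m-n}$ of $\calV$ on a neighborhood $U$ of $x_0$ and let $\theta_1,\dots,\theta_{m-n}$ be its dual coframe, extended by zero on $\calD$, so (a) is automatic. Since $\theta_k(E_j)=\delta_{jk}$ is constant and $\theta_k$ vanishes on $\calD$, property~(iii) of $\nabla$ gives
$$
d\theta_k(V,E_j) = -\theta_k([V,E_j]) = -\theta_k(\pr_\calV[V,E_j]) = -\theta_k(\nabla_V E_j)
$$
for $V\in\Gamma(\calD)$, so (b) is equivalent to $\nabla_V E_j = 0$ for all $V\in\Gamma(\calD)$ and all $j$; this is the same as saying that each $\theta_k$ is $\bnabla$-parallel along horizontal curves, by Proposition~\ref{prop:nabla}\eqref{item:c}. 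Using next $d\theta_k(hY,hZ) = -\theta_k(R(hY,hZ))$, property~(i) of $\nabla$, and the identity $\nabla_{hX}\theta_k = 0$ just established, one finds
$$
hX\bigl(d\theta_k(hY,hZ)\bigr) - d\theta_k(h\nabla^N_XY,hZ) - d\theta_k(hY,h\nabla^N_XZ) = -\theta_k\bigl((\nabla_{hX}R)(hY,hZ)\bigr),
$$
so (c) is equivalent to $\nabla_vR = 0$ for every $v\in\calD$. Together with (b) and Lemma~\ref{lem:Gro16}, this reproduces the parallel-transport half of condition~\eqref{item:II} of Theorem~\ref{th:main2} via the computation already carried out in the proof of Theorem~\ref{th:main2}.

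For (d), at $x\in U$ with $\alpha = \sum_k a_k \theta_k|_x$, one computes $\langle J_\alpha v_i, v_j\rangle_{g_N} = \alpha R(V_i,V_j) = -\sum_k a_k A_{k,ij}(x)$, so the matrix of $J_\alpha$ in the basis $v_1,\dots,v_n$ is $-\sum_k a_k A_k(x)$. Imposing (d) in every orthonormal frame of $\calD|U$ forces $-A_k(x)^2$ to be scalar, so that $J_{\theta_k}^2 v = -|J_{\theta_k}v|_{g_N}^2 v$ for every unit~$v$; polarization in $\alpha$ then propagates the identity to all $\alpha\in\Ann(\calD)_x$, recovering the algebraic half of~\eqref{item:II}.

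Combining the three decodings above yields the implication (a)--(d)~$\Rightarrow$~Theorem~\ref{th:main2}. For the converse, Theorem~\ref{th:main2} together with Corollary~\ref{cor:Parallel} already provides $\nabla_vR = 0$ for $v\in\calD$ as well as the scalar identity for $J_\alpha^2$; what remains, and what I expect to be the main obstacle, is to build the $\bnabla$-parallel local coframe $\theta_1,\dots,\theta_{m-n}$. The plan would be to start from a basis of $\Ann(\calD)_{x_0}$, propagate it by $\bnabla$-parallel transport along horizontal lifts of curves in $N$, and then extend along the fibers, verifying path-independence on a small neighborhood using $\nabla_vR = 0$ combined with the step-two structure of $\calD$ established in Proposition~\ref{prop:Rmetric}.
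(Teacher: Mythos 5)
Your decodings of (a)--(c) are sound and essentially match the computations in the paper: condition (b) is indeed equivalent to each $\theta_k$ being $\bnabla$-parallel in horizontal directions, and, given (b), condition (c) (read with $d\theta$ on the left, as in the paper's own computation) is equivalent to $\nabla_vR=0$ for $v\in\calD$. But the proposal has two genuine gaps. The main one is the direction ``Theorem~\ref{th:main2} $\Rightarrow$ existence of $\theta$'', which you explicitly leave as a plan: constructing a horizontally $\bnabla$-parallel coframe of $\Ann(\calD)$ by parallel transport requires a path-independence (flatness/holonomy-triviality) argument that does not follow in any evident way from $\nabla_vR=0$ and the step-two property, and you give none. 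The paper sidesteps this entirely: it invokes the canonical Riemannian extension $g_M$ of Proposition~\ref{prop:Rmetric}, whose fibers are totally geodesic, takes $\theta_1,\dots,\theta_{m-n}$ to be a local $g_M$-orthonormal coframe of $\Ann(\calD)$, and obtains (b) from the totally geodesic property via the identity $(\calL_V g_M)(\sharp\theta_i,\sharp\theta_j)=2\,d\theta_i(V,\sharp\theta_j)$, with (c) then following from $\nabla_vR=0$ exactly as in your computation; the converse is handled by defining an auxiliary metric from the given $\theta$ and reversing the same identities. Without some substitute for this device, your existence half is not a proof.

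The second gap is in your treatment of (d). From (d), even imposed in every orthonormal frame of $\calD$, you obtain that each $J_{\theta_k}^2$ is a nonpositive scalar multiple of $\Id$. Your claim that ``polarization in $\alpha$ then propagates the identity to all $\alpha\in\Ann(\calD)_x$'' is not valid: polarization presupposes that the identity $J_\alpha^2 v=-|J_\alpha v|^2 v$ holds for all $\alpha$, whereas knowing it only for the basis covectors $\theta_k$ gives no control on the mixed anticommutators $J_{\theta_k}J_{\theta_l}+J_{\theta_l}J_{\theta_k}$; requiring these to be scalar is precisely the extra H-type/Clifford condition studied in Section~\ref{sec:HType} and is not automatic. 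So your derivation of the algebraic half of~\eqref{item:II} from (d) does not go through as written (the paper itself treats this point by asserting that (d) reformulates~\eqref{eq:RvRw}, without a polarization argument), and you would need to argue directly how (d) for the specific coframe yields~\eqref{eq:J2} for arbitrary $\alpha$, or restrict the claim accordingly.
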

\begin{proof}
Since the properties in question are local, it is sufficient to show that they hold for an arbitrary sufficiently small neighborhood $U$ where $\Ann(\calD)$ is trivial. 

Assume first that Theorem~\ref{th:main2} holds. This is equivalent to \eqref{eq:vR} and \eqref{eq:RvRw} holding.
Let $g_M$ be the Riemannian metric constructed in Proposition~\ref{prop:Rmetric}. Note that the property of the fibers of $\pi: M \to N$ being totally geodesic manifolds is equivalent to the relation
\begin{equation} \label{eq:TGequivalence}(\calL_V g_M)(W,W) = 0, \qquad V \in \Gamma(\calD), W \in \Gamma(\calV),\end{equation}
see e.g.~\cite{GG}.
Choose an orthonormal basis $\theta_1, \dots, \theta_{m-n}$ of $\Ann(\calD)$ with respect to $g_M$. Define $\theta = (\theta_1, \dots, \theta_{m-n})$ as a $\mathbb{R}^{m-n}$ valued one-form which clearly satisfy~\eqref{local1} of Corollary~\ref{cor:local}.
We can also write
\begin{equation} \label{eq:Rtheta} R(V,W) = - \sum_{j=1}^{m-n} d\theta_j (V,W) \sharp \theta_j.\end{equation}
where $\sharp\colon T^*M \to TM$ is the identification with respect to $g_M$. Then the conditions of Theorem~\ref{th:main2} can be interpreted in terms of Corollary~\ref{cor:local} as follows.
Condition \eqref{local4} is a reformulation of~\eqref{eq:RvRw}. We will show that \eqref{local2} and \eqref{local3} follow from \eqref{eq:vR}.

Notice that we can restate the property~\eqref{eq:TGequivalence} as property \eqref{local2}. This follows from
\begin{align*} & (\calL_V g_M) (\sharp \theta_i, \sharp \theta_j) = (\calL_V \theta_i)(\sharp \theta_j) - \theta_i (\calL_V \sharp \theta_j) = 2 (\calL_V \theta_i)(\sharp \theta_j) = 2 d\theta_i(V, \sharp \theta_j).
\end{align*}
The property that $R$ is parallel in horizontal directions can then be expressed by relation
\begin{align} \label{eq:btoc}
0 &= \langle \sharp \theta_i, [hX , R(hY, hZ)] - R(h \nabla_X^N Y, hZ) - R(hY, h \nabla_X^N Z)  \rangle_{g_M} \\ \nonumber
& =  - hX d\theta_i( hY, hZ) + d\theta_i(h \nabla_X^N Y, hZ) + d\theta_i(hY, h \nabla_X^N Z)  \rangle_{g_M} \\ \nonumber
& \qquad - \sum_{i=1}^n d\theta_j(hY, hZ) \langle \sharp \theta_i, \calL_{hX} \sharp \theta_j \rangle_{g_M} \\ \nonumber
& =  - hX d\theta_i( hY, hZ) + d\theta_i(h \nabla_X^N Y, hZ) + d\theta_i(hY, h \nabla_X^N Z)  \rangle_{g_M} ,
\end{align}
the latter equality following from property \eqref{local2}.

Conversely, assuming that we have conditions from \eqref{local1} to \eqref{local4} satisfied, we define a Riemannian metric $g_M$ on $M$ such that $g_M | \calD = g$ and such that $\theta_1, \dots , \theta_{m-n}$ is an orthonormal basis of $\Ann(\calD)$. Then \eqref{eq:Rtheta} still holds, and reversing the arguments above, we obtain~\eqref{eq:vR} and~\eqref{eq:RvRw}.
\end{proof}


\section{Examples}\label{sec:example}


\subsection{Principal bundles} \label{sec:Principal}
For a given Lie group $G$ with Lie algebra $\mathfrak{g}$, let $G \to P \stackrel{\pi}{\to} N$ be a $G$-principal bundle. The action of $G$ is chosen to be on the right, as usual. Write $\calV = \ker d\pi$. For every element $A \in \mathfrak{g}$, we have a canonically associated vector field $\xi_A$ on $P$ defined by
$$\xi_A(p) = \frac{d}{dt} p \cdot e^{tA} |_{t=0}.$$
In fact, we have a vector bundle isomorphism between $N \times \mathfrak{g}$ and $\calV$, given by $(p, A) \mapsto \xi_A(p)$.
\emph{A connection form $\omega$ on $\pi$} is a $\mathfrak{g}$-valued one-form on $P$, that satisfies
$$\omega(v \cdot a) = \Ad(a^{-1}) \omega(v), \qquad \omega(\xi_A(p)) = A,$$
for any $a \in G$, $A \in \mathfrak{g}$, $v \in T_pP$ and $p \in P$.
Through the relation
$\calD = \ker \omega$,
there is a one-to-one correspondence between connection forms $\omega$ and Ehresmann connections $TP = \calD \oplus \calV$ that have the invariance property $\calD_p \cdot a = \calD_{p \cdot a}$.

The form $\Omega = d\omega + \frac{1}{2} [\omega, \omega]$ is called the curvature form of $\omega$. If $R$ is the curvature of~$\calD$, then
\begin{equation}\label{eq:Rxi}
R(V,W) =- \xi_{\Omega(V,W)}.
\end{equation}

The form $\Omega$ can be seen as a two-form on $N$ with values in a vector bundle $\Ad P$. The latter is a vector bundle $\Ad P \to N$ defined as the quotient of $P \times \mathfrak{g}$ under the equivalence relation
$$(p, A) \sim (p \cdot a, \Ad(a^{-1}) A), \qquad p \in P,\quad A \in \mathfrak{g}.$$
Denote the equivalence class of $(p,A)$ by $[p,A]$. A function $s^\wedge: P \to \mathfrak{g}$ is called equivariant if $s^\wedge(p \cdot a) = \Ad(a^{-1}) s^\wedge(p)$, $p \in P$, $a \in G$. There is a one-to-one correspondence between sections $s \in \Gamma(\Ad P)$ and equivariant functions on $P$ given by
$$s(x) = [p ,s^\wedge(p)], \qquad x \in N,\quad p \in P_x.$$
Given a connection form $\omega$, we can define an affine connection $\nabla^\omega$ on $\Ad P$, by defining $\nabla_X^\omega s$ to be the unique section corresponding to the equivariant function $ds^\wedge(hX)$ for $s \in \Gamma(\Ad P)$, $X \in \Gamma(TM)$. In general, a $j$-form $\eta^\wedge$ is called equivariant if
$$\eta^\wedge( v_1 \cdot a, \dots, v_j \cdot a) = \Ad(a^{-1}) \eta^\wedge(v_1, \dots, v_j).$$
Any equivariant $j$-form that vanishes on $\calV$ can be seen as a $j$-form $\eta$ on $N$ with values in $\Ad P$ through the relation
$$\eta(v_1, \dots, v_j) = [p, \eta^\wedge(h_p v_1, \dots, h_p v_j)].$$
The two-form $\Omega$ vanishes on $\calV$ and is equivariant. 

Assume now that $N$ has a Riemannian metric $g^N$ and lift it to a sub-Riemannian metric $g$ on $\calD = \ker \omega$ through $\pi$. If $\nabla$ satisfies the requirements (i)-(iii), then it is simple to verify that
$$[p, \omega((\nabla_{hX} R)(hY, hZ))] =- (\nabla^\omega_X \Omega)(Y, Z).$$
In the formula above, we have simplified the notation by using the symbol $\nabla^\omega$ for the connection induced on $\bigwedge^2 T^*N \otimes \Ad P$ by $\nabla^N$ and $\nabla^\omega$. 

The condition \eqref{eq:J2} can we written as
\begin{equation} \label{PBeq2}\langle \lambda \Omega(v,\cdot) , \lambda\Omega(w, \, \cdot \,) \rangle_{g_N} = 0,\end{equation}
whenever $\lambda \in \mathfrak{g}^*$ and $v$ and $w$ are orthogonal.

In conclusion, projections of geodesics in $(P, \calD, g)$ are always constant curvature curves if and only if $\nabla^\omega \Omega = 0$ and \eqref{PBeq2} are satisfied.

\begin{example}[Orthonormal frame bundle] \label{ex:Frame}
We will consider the sub-Riemannian geometry of the orthonormal frame bundle. For details on these spaces, we refer to \cite[Section~3.3.1]{Gro16} and \cite[Chapter~2.1]{Hsu02}.

 Let $N$ be a Riemannian manifold. Define $\pi\colon\Ort(N) \to N$ as the orthonormal frame bundle of $N$. This is an $\Ort(n)$-principal bundle such that for any $y \in N$
$$\Ort(N)_y = \left\{  f: \mathbb{R}^n \to T_y N \, : \, \text{$f$ is a linear isometry}\right\}.$$
Here, $\mathbb{R}^n$ is equipped with the standard euclidean structure. There is a one-to-one correspondence between maps $f: \mathbb{R}^n \to T_y N$ and choices $f_1, \dots, f_n$ of orthonormal frames of $T_yN$. If $e_1, \dots, e_n$  is the standard basis of $\mathbb{R}^n$, then the correspondence is given by $f_j = f(e_j)$.

Define a subbundle of $T\Ort(N)$ by
\begin{equation} \label{eq:tildeD} \tilde {\calD} = \left\{ \dot f(0) \, : \, \begin{array}{c} f:(-\varepsilon, \varepsilon) \to \Ort(N), \, \pi(f(t)) = \gamma(t), \, \nabla^N_{\dot \gamma} f_j(t) = 0 
\end{array} \right\}.\end{equation}
Then this subbundle is an Ehresmann connection which is invariant under the action of $\Ort(n)$. Hence, it corresponds to a principal curvature form $\omega: T\Ort(N) \to \so(n)$. Let $h_f u$ denote the horizontal lift of $u\in T_yN$, $y\in N$ to $f \in \Ort(N)_y$. If $R^N$ denotes the Riemannian curvature tensor of $N$, the curvature form $\Omega$ of $\omega$ is given as
$$\Omega(h_f u, h_f v) = -\Big(\langle R^N(u, v) f_i, f_j \rangle_{g_N}\Big)_{i,j}, \qquad u,v \in T_y N, \, f \in \Ort(N)_y, \, y \in N,$$
and $\nabla^\omega \Omega =0$ if and only if $\nabla^N R^N = 0$, i.e. if $N$ is a locally symmetric space.

Next, for $r = (r_1, \dots, r_n) \in \mathbb{R}^{n}$, define $f(r) = \sum_{i=1}^n r_i f_i$. Furthermore, for $r, s \in \mathbb{R}^n$, and $v\in T_yN$, if we define
$$J_{r,s}(f) := J_{r^\top \omega(f)( \, \cdot \,) s}.$$
then
$$J_{r,s}(f) v = - \sharp r^\top\Omega(h_fv, \, \cdot \,)s =  \sharp \langle R^N(v, \,\cdot \,)f(r) ,f(s)\rangle_{g_N} =  R^N(f(r), f(s))v.$$
Since $r$ and $s$ were arbitrary and since any $J_\alpha$ can be written as a sum of maps on the above form, it follows that~\eqref{eq:J2} is equivalent to
\begin{equation} \label{eq:R2} R^N(u,v)^2 w = R^N(u,v) R^N(u,v) w =  - |R^N(u,v) w|_{g_N}^2 w, \end{equation}
for unit vectors $u$, $v$ and $w$.

Hence, if we consider the sub-Riemannian manifold $(\Ort(N), \tilde {\calD}, g)$ where the metric $g$ is pulled back of $g_N$ from $N$, then projections of sub-Riemannian normal geodesics have constant first geodesic curvature and vanishing second geodesic curvature if and only if $N$ is a locally symmetric Riemannian manifold and satisfies \eqref{eq:R2}.
\end{example}

\subsection{Complete manifolds}
Assume that $\pi: M \to N$ is a submersion into a Riemannian manifold. Let $\calD$ be an Ehresmann connection on $\pi$ and assume that the conditions of Theorem~\ref{th:main2} holds. Furthermore, assume that the Riemannian metric $g_M$ defined in Proposition~\ref{prop:Rmetric} is complete. Since the fibers of $\pi$ are totally geodesic submanifolds, we can conclude the following from \cite{Her60}.
\begin{enumerate}[\rm (A)]
\item $N$ is a complete manifold.
\item Each fiber $M_y = \pi^{-1}(y)$ with the restricted metric is isometric to the same complete Riemannian manifold $(F, g_F)$.
\item Let $G$ be the isometry group of $F$. Then there exists some principal $G$-bundle $G \to P \to N$ such that $M$ is diffeomorphic to
$$M = (P \times F) /G.$$
Here, the action of $a \in G$ is given by $(p,z) \cdot a = (p \cdot a, a^{-1} z)$, $a \in G$, $p \in P$, $z \in F$.
\item If $\rho: P \times F \to M$ is the quotient map, then there is a principal connection $\omega$ on $P \to N$ such that if we define
$$\tilde {\calD} = \left\{ \frac{d}{dt} ( p(t), z) |_{t=0} \in T(P \times F) \, : \, p: (- \varepsilon, \varepsilon) \to P , \,  \omega(\dot p(t)) = 0 ,\, z \in F\right\},$$
then $\tilde {\calD}$ is an Ehresmann connection on $P \times F \to N$ and $d\rho(\tilde {\calD}) = \calD$.
\item As was observed in \cite[Remark~4.2]{GT16}, the assumption that $\calD$ is bracket-generating implies that $F$ is a homogeneous space, i.e. $F = G/K$ for some closed subgroup $K$ of $G$.
\end{enumerate}
For a vector field $X$ on $N$, let $hX$ and $\tilde hX$ denote its horizontal lifts to $\calD$ and $\tilde {\calD}$, respectively. Let $R$ and $\tilde R$ denote the respective curvatures of $\calD$ and $\tilde {\calD}$ and write $\Omega$ for the curvature form of $\omega$.

Write $\tilde{\calV}$ for the vertical bundle of $P \times F\to N$. Using the fact that $\tilde hX$ and $hX$ are $\rho$-related, we have that
$$R(hX, hY) = \mathrm{pr}_{\calV} \, [hX, hY] = d\rho\,  \mathrm{pr}_{\tilde{\calV}} \, [\tilde h X, \tilde h Y] = d\rho \, \tilde R(hX, hY) = - d\rho \, \xi_{\Omega(\tilde hX, \tilde hY)}.$$
Hence, $R(hX, hY)$ and $- \xi_{\Omega(\tilde hX, \tilde Y)}$ are $\rho$-related vector fields.
We can rewrite~\eqref{eq:vR} as
\begin{align*}
0 & = \mathrm{pr}_{\calV} \, [hX, R(hY, hZ)] - R(h\nabla_X^N Y, hZ) - R(hY, \nabla_X Z) \\
& = - \mathrm{pr}_{\calV}\,d\rho \left( [\tilde hX, \xi_{\Omega(\tilde hY, \tilde hZ)}] - \xi_{\Omega(\tilde h \nabla_X^N Y, \tilde hZ) + \Omega(\tilde h Y, \tilde h\nabla_X^N Z)} \right) \\
& = - d\rho \left( \xi_{\tilde hX \Omega(\tilde hY, \tilde hZ) -\Omega(\tilde h \nabla_X^N Y, \tilde hZ) - \Omega(\tilde h Y, \tilde h\nabla_X^N Z)} \right) = - d\rho \xi_{(\nabla_X^\omega \Omega)(Y,Z)^\wedge},
\end{align*}
where we have used the correspondence between equivariant functions and sections of $\Ad P$. Since the action of the isometry group on $F$ is faithful, the vector field $d\rho \xi_A$ only vanishes if $A = 0$. Hence,  we have that all projections from $M$ to $N$ of normal geodesics are of constant first geodesic curvature and vanishing second geodesic curvature if and only if $\nabla^\omega \Omega = 0$ and condition~\eqref{PBeq2} holds.

\begin{example}[Unit tangent sphere bundle]
Let $N$ be a Riemannian manifold and consider $\pi:SN\to N$ the unit tangent bundle, $$SN = \{ v \in TN \, : \, |v|_{g_N} =1 \}.$$
Define an Ehresmann connection $\calD$ on $\pi$ by
\begin{equation} \label{Dparallel} \calD = \left\{ \dot X(0) \, : \, X: (-\varepsilon, \varepsilon) \to SN, \, \pi(X(t)) =\eta(t), \, \nabla_{\dot \eta}^N X(t) = 0\right\}.
\end{equation}
We can identify $SN$ with the quotient $(\Ort(N) \times S^{n-1})/\Ort(n)$ where we have identified $\Ort(n)$ with the isometry group of $S^{n-1}$. The subbundle $\calD$ is then the image of $\tilde{\calD}$ from Example~\ref{ex:Frame} under the quotient map. Let $g$ be the sub-Riemannian metric on $\calD$ obtained from pulling back the Riemannian metric on $N$. It then follows from Example~\ref{ex:Frame} that the normal geodesics of $(SN, \calD, g)$ have constant curvature curves as projections if and only if $N$ is locally symmetric and \eqref{eq:R2} holds. \end{example}

\begin{remark}
Let $N$ be a Riemannian manifold and let $\pi: TN \to N$ be its tangent bundle. Let $\calD$ be an Ehresmann connection on $\pi$ be defined as in \eqref{Dparallel}. This allows us to define horizontal lifts of vector fields on $N$. However, we also have vertical lifts to vector fields with values in $\calV = \ker d\pi$. For any $X$, we define the vector field $\vl X$ on $TN$ by
$$\vl X(v) = \frac{d}{dt} (v + t X(y)) |_{t=0}, \qquad y \in N, v \in T_yN.$$
\emph{The Sasakian metric} $g_S$ on $TN$ is defined by
$$\langle hX, hY\rangle_{g_S} = \langle X,Y \rangle_{g_N}, \quad \langle \vl X, \vl Y \rangle_{g_S} = \langle X, Y \rangle_{g_N}, \quad \langle hX, \vl Y \rangle_{g_S} = 0.$$

Consider the restriction of $g_S$, $\pi$ and $\calD$ to $SN \subseteq TN$ which we, by slight abuse of notation, will denote by the same symbol. Then it was shown in~\cite{BBNV03,Sak76} that projections of Riemannian geodesics are constant curvature curves if and only if $\nabla^N R^N = 0$. It is interesting that we get the same result here, even though the metric $g_M$ defined in Section~\ref{ssec:spaces} only coincides with the Sasakian metric for the case of $N = S^n$. This result is a consequence of the fact that, equipped with the Sasakian metric, the submersion $\pi: TN \to N$ always has totally geodesic fibers, see e.g. \cite[Theorem~2]{Sak76}.
\end{remark}

\section{Transverse symmetries and $H$-type manifolds} \label{sec:HType}
Let $\pi: M \to N$ be a submersion between a sub-Riemannian and a Riemannian manifold. We assume that hypotheses of Theorem~\ref{th:main2} are fulfilled, therefore~\eqref{eq:vR} and \eqref{eq:RvRw} hold. Following the definition found in~\cite{BK16}, we say that a sub-Riemannian manifold $(M, \calD, g)$ has transverse symmetries if $\calV$ has a basis $V_1, \dots, V_{m-n}$ such that
\begin{equation} \label{TSym} \pr_{\calD} [X, V_i] = 0, \qquad X \in \Gamma(\calD).
\end{equation}
If $\nabla$ is the Bott connection defined in~\eqref{eq:Bott}, then
$$\nabla_v V_j= 0 \qquad \text{for any } v \in \mathcal{D}.$$
For these classes of manifolds, we state the following result. 

For an arbitrary oriented inner product space $\mathbb{V}$, let $\SO(\mathbb{V})$ be the group of orientation-preserving linear isometries of $\mathbb{V}$. If $\eta$ is a loop in $N$ based at $y$, we denote by $\ptr_\eta\colon T_y N \to T_y N$ the $\nabla^N$-parallel transport along $\eta$. We then define the holonomy group at $y$ by
$$\Hol(y) = \{ \ptr_\eta\colon\text{$\eta$ is a loop is based at $y$} \} \subseteq \SO(T_yN).$$
Then $\Hol(y)$ is a Lie subgroup of $\SO(T_yN)$ and if $y_1$ is another point in $N$, then $\Hol(y)$ and $\Hol(y_1)$ are conjugate groups since $N$ is connected. Hence, if $\Hol(y) = \SO(T_yN)$ at one point, then this is true for all points. Moreover any vector in $TN$ can be taken to another vector in $TN$ by parallel transport.
\begin{theorem} \label{th:Htype}
Let $g_M$ be the metric defined as in Proposition~\ref{prop:Rmetric}. Assume that each fiber $M_y = \pi^{-1}(y)$ is connected and that $\Hol(y) = \SO(T_yN)$ for some $y \in M$. Assume finally that the Lie algebra generated by $V_1, \dots, V_{m-n}$ is abelian.

Then
\begin{equation} \label{Htype} J_{\alpha}^2 = - | \alpha |^2_{g^*} \Id ,\end{equation}
for any $\alpha \in T^*M$. Furthermore, $\mathcal{V}$ has an orthonormal basis of infinitesimal symmetries.
\end{theorem}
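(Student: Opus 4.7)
The plan splits along the two conclusions of the theorem.

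For the $H$-type identity \eqref{Htype}, I would polarize the pointwise relation \eqref{eq:J2} of Theorem~\ref{th:main2}. Substituting the unit vector $(u+w)/\sqrt{2}$ for two orthonormal $u,w \in T_yN$ into \eqref{eq:J2} and matching the coefficients of $u$ and $w$ on both sides of the resulting identity forces $|J_\alpha u|_{g_N} = |J_\alpha w|_{g_N}$ and $\langle J_\alpha u, J_\alpha w\rangle_{g_N} = 0$. Hence $J_\alpha$ acts on $T_yN$ as a scalar multiple of a linear isometry, and $J_\alpha^2 = -c(\alpha)^2 \Id$ with $c(\alpha)$ the common length $|J_\alpha v|_{g_N}$ over unit $v$. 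The factor $\tfrac{2}{n}$ in Proposition~\ref{prop:Rmetric} was selected precisely so that $c(\alpha) = |\alpha|_{g_M^*}$, as recorded in Remark~\ref{re:Normalization}. This step needs only the hypotheses of Theorem~\ref{th:main2} and not the abelian, holonomy, or connectedness assumptions of the present statement.

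For the orthonormal basis of symmetries, let $G(x) = \bigl(g_M(V_i,V_j)(x)\bigr)_{ij}$ denote the Gram matrix of the given transverse symmetries. The strategy is to prove that $G$ is a globally constant matrix on $M$; once this is established, setting $C := G^{-1/2}$ yields a numerically constant invertible matrix and $W_k := \sum_j C_{kj}V_j$ is a pointwise $g_M$-orthonormal frame of $\calV$. Because the $C_{kj}$ are scalars, both the transverse-symmetry property $\pr_\calD[X, W_k] = \sum_j C_{kj}\pr_\calD[X,V_j] = 0$ and the abelian commutation $[W_i,W_j] = \sum_{k,l} C_{ik}C_{jl}[V_k,V_l] = 0$ transfer from the $V_j$ to the $W_k$, producing the desired commuting infinitesimal symmetries.

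To establish constancy of $G$, I would argue separately in horizontal and vertical directions. Horizontally, the Bott-connection identities $\nabla g_M = 0$ from Proposition~\ref{prop:Rmetric} together with $\nabla_X V_j = 0$ for $X \in \Gamma(\calD)$ (asserted right after the definition of transverse symmetries) immediately give $XG_{ij} = g_M(\nabla_X V_i, V_j) + g_M(V_i, \nabla_X V_j) = 0$. Vertically, expand $V_k G_{ij} = (\calL_{V_k}g_M)(V_i,V_j) + g_M([V_k,V_i],V_j) + g_M(V_i,[V_k,V_j])$; the last two terms vanish by the abelian hypothesis $[V_k,V_i]=[V_k,V_j]=0$, and the first vanishes because the totally geodesic fiber structure from Proposition~\ref{prop:Rmetric} combined with the abelian hypothesis exhibits each fiber as a flat abelian group with the $V_k$ acting as Killing translations. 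The connectedness of each fiber promotes these pointwise vanishings to fiber-wide constancy, and the hypothesis $\Hol(y) = \SO(T_yN)$ together with \eqref{Htype} (which forces $\bnabla$-parallel transport to preserve the $g_M^*$-norm on $\Ann(\calD)$) propagates the common value of $G$ across distinct fibers.

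The main obstacle will be the vertical step and the cross-fiber propagation. Horizontal constancy and the algebraic Gram-Schmidt construction are routine, but the nontrivial content lies in showing that the three additional hypotheses conspire to rule out any fiber-dependent rescaling of $G$: the abelian hypothesis provides Lie-algebraic rigidity on each fiber, the connected totally geodesic fiber structure turns this into metric rigidity within a fiber, and the full-holonomy assumption on $N$ gives the gluing across fibers via $\bnabla$-parallel transport.
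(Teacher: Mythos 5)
Your first step (polarizing \eqref{eq:J2}) is sound as linear algebra: \eqref{eq:J2} says every unit vector is an eigenvector of the symmetric operator $J_\alpha^2$, so $J_\alpha^2=-c(\alpha)^2\Id$, and Remark~\ref{re:Normalization} identifies $c(\alpha)^2=|\alpha|^2_{g_M^*}$. It is also formally consistent with the standing assumption of this section as literally written. But it is not the paper's route, and it proves too much: the paper's proof never uses \eqref{eq:J2} pointwise for all $v$; it only uses that $|J_{\beta(t)}X(t)|$ is constant along curves with parallel data (a consequence of $\nabla R=0$, \eqref{eq:vR}), and the entire role of the connected-fiber, abelian and $\Hol(y)=\SO(T_yN)$ hypotheses is to upgrade this to independence of $|J_\alpha v|$ from the unit vector $v$. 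If \eqref{eq:J2} in full strength were available, those hypotheses would be superfluous and the paper's closing Heisenberg$\times$Heisenberg example, offered precisely to show the holonomy assumption is necessary, would be contradicted; indeed in that example \eqref{eq:J2} fails, e.g.\ for $\alpha=Z^*$ and $v=(X+\hat X)/\sqrt{2}$. So in the setting the theorem is actually aimed at, the $v$-independence of $|J_\alpha v|$ is exactly what must be manufactured from holonomy, fiber connectedness and the parallel coframe $\theta_i=\langle V_i,\cdot\rangle_{g_M}$, which is what the paper does; your shortcut bypasses that content rather than reproducing it.

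For the second conclusion your Gram-matrix plan has a genuine gap. The vertical step asserts $(\calL_{V_k}g_M)(V_i,V_j)=0$ because the fibers are ``flat abelian groups with the $V_k$ as Killing translations'', but that is precisely what needs proof: total geodesy gives \eqref{eq:TGequivalence}, i.e.\ $(\calL_Vg_M)(W,W)=0$ for \emph{horizontal} $V$, and $[V_i,V_j]=0$ alone does not make $V_k$ Killing along the fibers. The cross-fiber propagation via $\Hol(y)$ is vague (the holonomy acts on $T_yN$, not on the Gram matrix of vertical fields) and, more importantly, unnecessary. The paper's argument is shorter and complete: your horizontal computation $X\langle V_i,V_j\rangle_{g_M}=0$ already holds, and since $\calD$ is bracket generating, any two points of $M$ are joined by horizontal curves (Chow--Rashevskii), so $\langle V_i,V_j\rangle_{g_M}$ is constant on all of $M$ with no vertical analysis at all; a constant-coefficient Gram--Schmidt then gives the commuting orthonormal frame satisfying \eqref{TSym}, exactly as in your final display. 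Note also that in the paper the two conclusions are not independent: the orthonormal coframe $\theta_i$ built here is the input for the holonomy argument giving \eqref{Htype}.
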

Manifolds where \eqref{Htype} is satisfied are said to be of $H$-type according to \cite[Section~4]{BK16}. Using polarization, we get that
$$J_\alpha J_\beta + J_\beta J_\alpha = - 2 \langle \alpha, \beta \rangle_{g^*_M} \Id .$$

\begin{proof}
Observe first that since $V_i$ is a parallel vector field along any horizontal curve and any pair of points can be connected by such curves, the function $\langle V_i, V_j \rangle_{g_M}$ is constant on $M$. Hence, we can take a linear combination of these vector fields with constant coefficients to obtain a global orthonormal basis which still satisfies \eqref{TSym}. We will assume from now on that we have chosen $V_1, \dots, V_{m-n}$ orthonormal.

By the definition of the Levi-Civita connection, since $[V_i, V_j] = 0$, we have that $\nabla V_i =0$. Let $\theta_1, \dots, \theta_{m-n}$ be a basis of $\Ann(\calD)$ determined by $\theta_i = \langle V_i, \, \cdot \, \rangle_{g_M} $. Let $\gamma: I \to M$ be a any curve with projection $\eta$ in $N$. Then for any parallel vector field $X(t)$ along $\eta$, we have
$$\frac{d}{dt} | J_{\theta_i(\gamma(t))} X(t) |^2 = 0,$$
from Theorem~\ref{th:main2}.
In particular, if $\gamma(t)$ is a curve in $M_y$, then $|J_{\theta_i(\gamma(t))} v|$ is constant for any $v \in T_y N$. Since $M_y$ is connected, we have that $|J_{\theta_i(x)} v|$ is constant for any $x \in M_y$ and fixed $v \in T_yN$. Furthermore, if $v \in T_{y_0} N$ and $w \in T_{y_1} N$ are two unit vectors, by our assumption on the holonomy, there is a curve $\eta: I \to N$ and a parallel vector field $X$ along $\eta$ with $X(0) = v$ and $X(1) = w$. Hence, if $\gamma$ is a horizontal lift of $\eta$, we have that $| J_{\theta(\gamma(0))} v | = | J_{\theta(\gamma(1))} w |$. In conclusion, there is some constant $c$ such that
$$c = | J_{\alpha} v| \quad \text{for any $(\alpha, v) \in \Ann(\calD_x) \oplus \pi^* T_yN$,\ \ $\pi(x)=y$\ \  with $|\alpha|_{g^*_M} = 1,\ |v|_{g_N} =1$.}$$
The result finally follows by realizing that $c^2 = | \alpha |_{g^*_M}^2 =1$ by Remark~\ref{re:Normalization}.
\end{proof}

\begin{example}
Let $G \to P \to N$ be a $G$-principal bundle over a Riemannian manifold $(N, g_N)$. Consider a curvature form $\omega$ and define $(P, \calD, g)$ as in Section~\ref{sec:Principal}. For any element $A\in \mathfrak{g}$, we have that $\xi_A$ satisfies \eqref{TSym}. Hence, this is a sub-Riemannian manifold with transverse symmetries. Assume that $\mathrm{Hol}(y) = \SO(T_yN)$ for some $y \in N$ and assume that $G$ is abelian. Then the conditions of Theorem~\ref{th:Htype} is satisfied.
\end{example}

\begin{example}
We will end with an example showing that assumption on holonomy is necessary for the result. Consider the Lie algebra spanned by elements $X$, $Y$ and $Z$, with bracket relations
$$[X,Y] = Z, \qquad [X,Z] = [Y,Z] = 0.$$
This algebra is called \emph{the Heisenberg algebra}. Let $M$ be the corresponding simply connected Lie group. We will use the same symbol for elements in the Lie algebra and their corresponding left invariant vector fields. Let $X^*$, $Y^*$ and $Z^*$ denote the corresponding coframe.

Let $\hat M$ be another copy of $M$ with corresponding left invariant basis $\hat X$, $\hat Y$ and $\hat Z$ and use similar notation for its coframe. Let $(\calD, g)$ be a sub-Riemannian structure on $M \times \hat M$ such that $X$, $Y$, $\hat X$ and $\hat Y$ form an orthonormal basis. If we define $K$ as the abelian subgroup with Lie algebra $\mathfrak{k}$ spanned by $Z$ and $\hat Z$, we get a submersion $\pi: M \times \hat M \to N = \mathbb{R}^4 = (M \times \hat M)/K$. If $N$ is equipped with the standard euclidean metric $g_N$, then $d\pi$ is a linear isometry from $\calD$ to $TN$ on every fiber.

The sub-Riemannian manifold $(M \times \hat M, \calD, g)$ satisfies the conditions of Theorem~\ref{th:main2}. Furthermore, $Z$ and $\hat Z$ satisfy \eqref{TSym} and these commute except the conditions for the holonomy. We also see that
$$|J_{Z^*} X| = |J_{Z^*} Y| =1, \quad \text{while} \quad |J_{Z^*} \hat X| = |J_{Z^*} \hat Y| = 0,$$
reflecting the fact that we cannot take vectors tangent to $M$ to vectors tangent to $\hat M$ using parallel transport.
\end{example}

\end{document}